\documentclass[12pt,reqno]{amsart}
\usepackage[a4paper, hmargin={2.7cm,2.7cm},vmargin={3.3cm,3.3cm}]{geometry}
\usepackage{latexsym,amsmath,amssymb,amscd}
\usepackage[noadjust]{cite}
\usepackage{amsfonts}
\usepackage{amsthm}
\usepackage{enumerate}
\usepackage{amsthm}
\usepackage{todonotes}
\usepackage{cleveref}
\usepackage[all]{xy}
\usepackage{dsfont}
\usepackage{etoolbox}
\numberwithin{equation}{section}

\makeatletter
\patchcmd{\ttlh@hang}{\parindent\z@}{\parindent\z@\leavevmode}{}{}
\patchcmd{\ttlh@hang}{\noindent}{}{}{}
\makeatother

\makeatletter
\providecommand*{\cupdot}{%
  \mathbin{%
    \mathpalette\@cupdot{}%
  }%
}
\newcommand*{\@cupdot}[2]{%
  \ooalign{%
    $\m@th#1\cup$\cr
    \sbox0{$#1\cup$}%
    \dimen@=\ht0 %
    \sbox0{$\m@th#1\cdot$}%
    \advance\dimen@ by -\ht0 %
    \dimen@=.5\dimen@
    \hidewidth\raise\dimen@\box0\hidewidth
  }%
}

\providecommand*{\bigcupdot}{%
  \mathop{%
    \vphantom{\bigcup}%
    \mathpalette\@bigcupdot{}%
  }%
}
\newcommand*{\@bigcupdot}[2]{%
  \ooalign{%
    $\m@th#1\bigcup$\cr
    \sbox0{$#1\bigcup$}%
    \dimen@=\ht0 %
    \advance\dimen@ by -\dp0 %
    \sbox0{\scalebox{2}{$\m@th#1\cdot$}}%
    \advance\dimen@ by -\ht0 %
    \dimen@=.5\dimen@
    \hidewidth\raise\dimen@\box0\hidewidth
  }%
}
\makeatother

\newtheorem{theorem}{Theorem}[section]
\newtheorem{lemma}[theorem]{Lemma}
\newtheorem{proposition}[theorem]{Proposition}

\theoremstyle{definition}
\newtheorem{definition}[theorem]{Definition}

\theoremstyle{remark}

\DeclareMathOperator{\PW}{PW}

\newcommand{\Hpi}{\mathcal{H}_{\pi}}

\newcommand{\R}{\mathbb{R}}

\newcommand{\N}{\mathbb{N}}

\newcommand{\ve}{\varepsilon}

\DeclareMathOperator{\ess}{ess}

\newcommand{\gf}{g_{\lambda}}
\newcommand{\df}{h_{\lambda}}

\title{Riesz sequences near the critical density}

\author{Marcin Bownik}
\address{Department of Mathematics, University of Oregon, Eugene, OR 97403--1222, USA}
\email{mbownik@uoregon.edu}

\author{Jordy Timo van Velthoven}
\address{Faculty of Mathematics,
University of Vienna,
Oskar-Morgenstern-Platz 1,
1090 Vienna, Austria}
\email{jordy-timo.van-velthoven@univie.ac.at}

\makeatletter
\@namedef{subjclassname@2020}{\textup{2020} Mathematics Subject Classification}
\makeatother

\subjclass[2020]{42A65, 42C15, 42C40, 46B15}

\keywords{Beurling density, critical density, Riesz sequence, reproducing kernels}

\begin{document}

\maketitle

\begin{abstract}
We construct Riesz sequences  whose Beurling density is arbitrary close to the critical density in the setting of reproducing kernel Hilbert spaces on metric measure spaces. In particular, our results apply to exponential systems on unbounded sets, nonlocalized Gabor systems and general coherent systems arising from nilpotent Lie groups. The methods used in this paper are based on our previous work on the redundancy of frames and a selector form of Weaver's conjecture.
\end{abstract}

\section{Introduction}
The notion of reproducing kernels in a Hilbert space underlies many classical systems of functions in complex and harmonic analysis, such as those appearing in Paley-Wiener spaces, weighted spaces of holomorphic functions and spaces of matrix coefficients on Lie groups. In particular, they provide a convenient unifying framework for studying sampling and interpolation sets, which correspond to those sets whose associated reproducing kernels form a frame or a Riesz sequence. We refer to   \cite{olevskii2016functions, seip2004interpolation, young2001introduction} (and the references therein) for classical treatments on these topics.

The present paper is devoted to studying the existence of Riesz sequences and interpolation sets with density arbitrary close to the critical density. We study these questions in certain reproducing kernels on metric measure spaces, which is a setting considered earlier in, e.g., \cite{fuehr2017density, mitkovsi2020density, bownik2025redundancy}. This setting has the advantage that it provides a unifying framework for various of the aforementioned classical examples and allows us to obtain new results in each of these settings at once.

\subsection{Prior results}
Before describing our new results, we provide the relevant background and prior results. For a simpler presentation, we restrict our attention to Euclidean space throughout the introduction.

For a reproducing kernel Hilbert space $\mathcal{H} \subseteq L^2 (\R^n)$, we denote by $\{k_x \}_{x \in \R^n}$ the associated reproducing kernels satisfying $f(x) = \langle f, k_x \rangle$ for all $x \in \R^n$.
We will assume common assumptions on the reproducing kernels as in, e.g., \cite{fuehr2017density, enstad2025dynamical, mitkovsi2020density, bownik2025redundancy, grochenig2024necessary, grochenig2019strict}. Specifically, we assume that the kernels satisfy the following conditions:
\begin{enumerate}[-]
 \item \emph{Diagonal conditon}: There exist constants $0 < C_1 \leq C_2 < \infty$ such that
\begin{align} \label{eq:dc_intro} \tag{DC}
 C_1 \leq \| k_x \|^2 \leq C_2 \quad \text{for all} \quad x \in \mathbb{R}^n.
\end{align}

\item \emph{Weak localization}: For every $\varepsilon > 0$, there exists $r = r(\varepsilon) > 0$ such that
\begin{align} \label{eq:wl_intro} \tag{WL}
 \sup_{x \in \mathbb{R}^n} \int_{\mathbb{R}^n \setminus B_r (x)} |\langle k_x, k_y \rangle |^2 \; dy < \varepsilon^2.
\end{align}

\item \emph{Homogeneous approximation property}: If $\Lambda \subseteq \mathbb{R}^n$ is a countable set such that $\{k_{\lambda} \}_{\lambda \in \Lambda}$ is a Bessel sequence in $\mathcal{H} \subseteq L^2 (\mathbb{R}^n)$ and $\varepsilon > 0$, then there exists $r = r(\varepsilon) > 0$ such that
\begin{align} \label{eq:hap_intro} \tag{HAP}
 \sup_{x \in \mathbb{R}^n} \sum_{\lambda \in \Lambda \setminus B_r (x)} |\langle k_x, k_{\lambda} \rangle |^2  < \varepsilon^2.
\end{align}
\end{enumerate}

These conditions are satisfied for a variety of examples studied extensively in the literature. In particular, we mention Paley-Wiener spaces of functions with bounded spectrum \cite{olevskii2016functions, olevskii2012revisiting}, Gabor spaces \cite{ramanathan1995incompleteness, christensen1999density}, certain spaces of matrix coefficients of Lie groups \cite{grochenig2008homogeneous, enstad2025dynamical}, weighted spaces of entire functions \cite{grochenig2019strict} and spectral subspaces of elliptic differential operators \cite{grochenig2024necessary, grochenig2017what}.

Under the diagonal condition \eqref{eq:dc_intro}, weak localization condition \eqref{eq:wl_intro} and the homogeneous approximation property \eqref{eq:hap_intro}, the following necessary density conditions for frames and Riesz sequences were shown in \cite[Corollary 4.1]{fuehr2017density}.

\begin{theorem}[\cite{fuehr2017density}] \label{thm:necessary_intro}
Let $\mathcal{H} \subseteq L^2 (\mathbb{R}^n)$ be a reproducing kernel Hilbert space with reproducing kernels $\{k_x \}_{x \in \mathbb{R}^n}$. Suppose that $\{k_x \}_{x \in \mathbb{R}^n}$ satisfies the diagonal condition \eqref{eq:dc_intro}, the weak localization \eqref{eq:wl_intro} and the homogeneous approximation property \eqref{eq:hap_intro}. Then the following assertions hold:
\begin{enumerate}
 \item[(i)] If $\{k_{\lambda} \}_{\lambda \in \Lambda}$ is a frame for $\mathcal{H}$, then
 \[
  D^-_0 (\Lambda) := \liminf_{r \to \infty} \inf_{x \in \mathbb{R}^n} \frac{\#\big(\Lambda \cap B_r (x) \big)}{\int_{B_r(x)} k(y,y) \; dy} \geq 1.
 \]

\item[(ii)] If $\{k_{\lambda} \}_{\lambda \in \Lambda}$ is a Riesz sequence in $\mathcal{H}$, then
 \[
  D^+_0 (\Lambda) := \limsup_{r \to \infty} \sup_{x \in \mathbb{R}^n} \frac{\#\big(\Lambda \cap B_r (x) \big)}{\int_{B_r(x)} k(y,y) \; dy} \leq 1.
 \]
\end{enumerate}
In particular, if $\{k_{\lambda} \}_{\lambda \in \Lambda}$ is a Riesz basis for $\mathcal{H}$, then $D_0^+(\Lambda) = D^-_0 (\Lambda) = 1$.
\end{theorem}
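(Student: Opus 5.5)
The plan is to follow Landau's trace comparison method, in the form adapted by Führ, Gröchenig, Haimi, Klotz and Romero. For a ball $B = B_r(x)$, let $P_\mathcal{H}$ denote the orthogonal projection onto $\mathcal{H}$. Define the concentration operator $T_B := P_\mathcal{H} M_{\mathds{1}_B} P_\mathcal{H}$ restricted to $\mathcal{H}$, so that
\[
\langle T_B f, f\rangle = \int_B |f|^2 .
\]
This operator is positive with $0 \le T_B \le I$. Using the reproducing formula, its trace is $\int_B \|k_y\|^2\,dy = \int_B k(y,y)\,dy$, which is finite by \eqref{eq:dc_intro}. Its Hilbert--Schmidt norm is $\int_B\int_B |\langle k_x,k_y\rangle|^2\,dx\,dy$. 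Combined with $\int_{\mathbb{R}^n}|\langle k_x,k_y\rangle|^2\,dy = k(x,x)$ and \eqref{eq:wl_intro}, this gives
\[
\operatorname{tr}(T_B) - \operatorname{tr}(T_B^2) = \int_B\int_{\mathbb{R}^n\setminus B} |\langle k_x,k_y\rangle|^2 \, dy\,dx .
\]
The right-hand side is $o(|B|)$ as $r\to\infty$, uniformly in $x$. To see this, split $B$ into the points at distance $\ge r(\varepsilon)$ from $\partial B$, which contribute at most $\varepsilon^2|B|$, and a boundary shell of volume $o(r^n)$. Consequently, for fixed $0<\delta<1$, the number of eigenvalues of $T_B$ in $(\delta,1-\delta)$ is $o(|B|)$, and the number of eigenvalues exceeding $1-\delta$ is $\operatorname{tr}(T_B)+o(|B|)$.

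\emph{Frame case (i).} Suppose $\{k_\lambda\}$ is a frame with lower bound $A$. Let $V$ be the span of eigenvectors of $T_B$ with eigenvalue $>1-\delta$. I would show that the map $V\ni f\mapsto (\langle f,k_\lambda\rangle)_{\lambda\in\Lambda\cap B_{r+R}(x)}$ is injective, which yields $\dim V\le \#(\Lambda\cap B_{r+R}(x))$. For $f\in V$ we have $\int_{\mathbb{R}^n\setminus B}|f|^2\le\delta\|f\|^2$. The task is to prove that $\sum_{\lambda\notin B_{r+R}}|f(\lambda)|^2$ is small relative to $\|f\|^2$. For this I write $f = f\mathds{1}_B + f\mathds{1}_{B^c}$ and use $f(\lambda) = \langle f, k_\lambda\rangle$. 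The $B^c$ part is controlled by the Bessel bound times $\delta$. The $B$ part is controlled by \eqref{eq:hap_intro}: for $y\in B$, the sum of $|\langle k_y,k_\lambda\rangle|^2$ over $\lambda\notin B_R(y)$ is below $\varepsilon^2$, and a Schur-type estimate on the kernel $(y,\lambda)\mapsto\langle k_y,k_\lambda\rangle$ (with $y\in B$, $\lambda$ outside $B_{r+R}$) finishes this part. Choosing $\delta,\varepsilon$ small relative to $A$, the frame inequality then forces $\sum_{\lambda\in B_{r+R}}|f(\lambda)|^2\ge (A/2)\|f\|^2$, which gives injectivity. Dividing by $\int_{B_{r+R}}k(y,y)\,dy$ and letting $r\to\infty$ with $R$ fixed (the volume ratio tends to $1$, and \eqref{eq:dc_intro} makes the denominators comparable to Lebesgue volume) gives $D_0^-\ge 1-\delta$. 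Since $\delta$ is arbitrary, $D_0^-\ge 1$.

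\emph{Riesz case (ii).} Suppose $\{k_\lambda\}$ is a Riesz sequence. Let $\{g_\lambda\}$ be its biorthogonal system in $\overline{\operatorname{span}}\{k_\lambda\}$, which is itself a Riesz sequence. Consider $W=\operatorname{span}\{k_\lambda:\lambda\in\Lambda\cap B_{r-R}(x)\}$. Its dimension equals that cardinality, and I show that the compression of $T_B$ to $W$ is bounded below by $1-\delta$ once $R$ is large. For $f=\sum c_\lambda k_\lambda\in W$, \eqref{eq:wl_intro} applied to each $k_\lambda$ (whose center lies at distance $\ge R$ from $B^c$) gives $\int_{B^c}|f|^2\le\delta\|f\|^2$. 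This step needs the $\ell^2$ control coming from the Riesz bounds together with a Schur/almost-orthogonality bound. To get it, I would estimate $\|\mathds{1}_{B^c}f\|$ via duality, $\sup_{\|h\|\le1}\sum c_\lambda\langle \mathds{1}_{B^c}h,k_\lambda\rangle$, and control $\sum_{\lambda\in B_{r-R}}|\langle \mathds{1}_{B^c}h,k_\lambda\rangle|^2$ using \eqref{eq:hap_intro} and \eqref{eq:wl_intro}. By min-max, $T_B$ then has at least $\#(\Lambda\cap B_{r-R}(x))$ eigenvalues $>1-\delta$, so this count is at most $\operatorname{tr}(T_B)/(1-\delta)+o(|B|)$, which yields $D_0^+\le 1$. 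The Riesz basis statement follows by combining (i) and (ii).

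The main obstacle is the localization estimate in each case, namely the Schur-type bound converting the pointwise tail condition \eqref{eq:hap_intro} into a bound for the bilinear sum over $y\in B$ and $\lambda$ far away. I would handle it by a further annulus decomposition. Note that \eqref{eq:hap_intro} applies because the Bessel property holds in both cases: a frame and a Riesz sequence are both Bessel sequences.
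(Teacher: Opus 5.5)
\paragraph{Verdict.} The paper gives no proof of this statement; it quotes it from \cite{fuehr2017density}, where it is proved by Landau's concentration-operator method. Your overall framework is therefore the expected one, and your formula for $\operatorname{tr}(T_B)-\operatorname{tr}(T_B^2)$ and the eigenvalue counts are correct. There is, however, a genuine gap in the two localization steps, because you set both up as \emph{operator-norm} estimates.

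\paragraph{Where it fails.} In (i) you need $\sum_{\lambda\notin B_{r+R}(x)}|\langle f\mathds{1}_B,k_\lambda\rangle|^2\le\eta\|f\|^2$, obtained from a Schur-type bound for the kernel $\langle k_y,k_\lambda\rangle$ on $B\times(\Lambda\setminus B_{r+R}(x))$. In (ii) you need $\int_{B^c}|f|^2\le\delta\|f\|^2$ for \emph{every} $f\in W$. Conditions \eqref{eq:wl_intro} and \eqref{eq:hap_intro} are rate-free $L^2$/$\ell^2$ tail conditions, so no Schur test is available, since that needs $L^1$/$\ell^1$ row and column bounds. An annulus decomposition cannot rescue this either, because both estimates already fail for $\mathcal{H}=\PW_{[-1/2,1/2]}$, $k(x,y)=\operatorname{sinc}(x-y)$, $\Lambda=\mathbb{Z}$. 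This $\Lambda$ gives an orthonormal basis, so both parts of the theorem apply.

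Take $B=(-r,r)$, $R\ll M\le r$, and $f=\sum_{r-M\le n\le r-R}(-1)^n(r-n)^{-1/2}k_n\in W$. Since $\sin\pi(t-n)=(-1)^n\sin\pi t$, for $t>r$ you get
\[
f(t)=\pi^{-1}\sin(\pi t)\sum_n (r-n)^{-1/2}(t-n)^{-1},
\]
with all terms of the sum positive. The sum is $\gtrsim (t-r)^{-1/2}$ for $R\le t-r\le M/4$. Hence $\int_{B^c}|f|^2\gtrsim\log(M/R)\asymp\|f\|^2$, with an absolute constant, no matter how large $R$ is. So the compression of $T_B$ to $W$ is not $\ge 1-\delta$.

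The mirror image $\sum_{r+R\le n\le r+M}(-1)^n(n-r)^{-1/2}k_n$ puts a fixed fraction of its mass inside $B$. By duality, the map $g\mapsto(\langle g,k_\lambda\rangle)_{\lambda\notin B_{r+R}(x)}$ on $L^2(B)$ therefore has norm bounded below independently of $R$. The slowly decaying tails of the individual kernels add up coherently, like a Hilbert transform.

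\paragraph{The fix.} Use \eqref{eq:hap_intro} and \eqref{eq:wl_intro} only pointwise and then integrate or sum, i.e.\ through traces. This also makes the eigenvalue analysis unnecessary.

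\emph{Part (i).} Let $\{h_\lambda\}$ be the canonical dual frame (Bessel bound $A^{-1}$) and $W=\operatorname{span}\{h_\lambda:\lambda\in\Lambda\cap B_{r+R}(x)\}$. From $k_y=\sum_\lambda\langle k_y,k_\lambda\rangle h_\lambda$ and \eqref{eq:hap_intro}, you get $\operatorname{dist}(k_y,W)^2<A^{-1}\varepsilon^2$ for every $y\in B=B_r(x)$. Writing $T_B=\int_B\langle\cdot,k_y\rangle k_y\,dy\le\mathbf{I}$, this gives
\[
\#\big(\Lambda\cap B_{r+R}(x)\big)\ge\dim W\ge\operatorname{tr}(P_WT_BP_W)=\int_B\|P_Wk_y\|^2\,dy\ge\int_B k(y,y)\,dy-A^{-1}\varepsilon^2|B| .
\]
If you prefer to stay within your own setup, the pointwise use of \eqref{eq:hap_intro} yields only the Hilbert--Schmidt bound $\sum_{\lambda\notin B_{r+R}(x)}\|P_Vk_\lambda\|^2\le(1-\delta)^{-1}\varepsilon^2|B|$. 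That still suffices if you run the injectivity argument on the subspace of $V$ on which the far-sampling map has norm $\le\eta$; this subspace has codimension $O(\varepsilon^2|B|/\eta^2)$.

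\emph{Part (ii).} Let $F=\Lambda\cap B_{r-R}(x)$ with $R\ge r(\varepsilon)$. Let $\{\tilde g_\lambda\}_{\lambda\in F}\subseteq W$ be the dual basis of $\{k_\lambda\}_{\lambda\in F}$, so that $\|\tilde g_\lambda\|\le A^{-1/2}$ and $P_Wf=\sum_{\lambda\in F}\langle f,\tilde g_\lambda\rangle k_\lambda$. Then
\[
\#F-\operatorname{tr}(T_BP_W)=\sum_{\lambda\in F}\big\langle P_{\mathcal{H}}(\mathds{1}_{B^c}k_\lambda),\tilde g_\lambda\big\rangle\le \#F\,A^{-1/2}\varepsilon ,
\]
by \eqref{eq:wl_intro} applied to each $k_\lambda$ separately. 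On the other hand,
\[
\operatorname{tr}(T_BP_W)=\operatorname{tr}(T_B^{1/2}P_WT_B^{1/2})\le\operatorname{tr}(T_B)=\int_Bk(y,y)\,dy .
\]
This is exactly where the biorthogonal system you introduced, but never used, enters.
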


The Beurling densities $D_{0}^{\pm}$ appearing in Theorem \ref{thm:necessary_intro} are weighted versions of the usual Beurling densities $D^{\pm}$ on $\R^n$, in the sense that they are Beurling densities with respect to the weighted measure $k(y,y) \; dy$.
In this dimension-free form of the necessary density conditions, the critical value of the density that separates frames and Riesz sequences is precisely  $1$.

In \cite{bownik2025redundancy}, we showed the optimality of the necessary density condition for frames in Theorem~\ref{thm:necessary_intro} by showing the following theorem:

\begin{theorem}[\cite{bownik2025redundancy}] \label{thm:frame}
Under the assumptions of Theorem \ref{thm:necessary_intro}, the following assertion holds:
For every $\varepsilon > 0$, there exists $\Lambda \subseteq \mathbb{R}^n$ satisfying
 \[
  D^+_0 (\Lambda) \leq 1 + \varepsilon
 \]
 and such that $\{k_{\lambda} \}_{\lambda \in \Lambda}$ is a frame for $\mathcal{H}$.
\end{theorem}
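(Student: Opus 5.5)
The approach is to start from the continuous frame $\{k_x\}_{x\in\mathbb{R}^n}$ and make it discrete in two steps. First sample it densely to get a frame with controlled upper density. Then thin that frame by a selector form of Weaver's conjecture (Kadison--Singer in the Marcus--Spielman--Srivastava form), run locally on cells, so that the density drops to almost $1$ while the frame property survives.

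\textbf{Step 1: continuous Parseval frame.} Since $\mathcal{H}$ is a reproducing kernel space inside $L^2(\mathbb{R}^n)$, every $f\in\mathcal{H}$ satisfies
\[
\int |\langle f,k_x\rangle|^2\,dx=\|f\|^2 .
\]
Hence $\{k_x\}_{x\in\mathbb{R}^n}$ is a continuous Parseval frame.

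\textbf{Step 2: discretization.} Partition $\mathbb{R}^n$ into cubes $Q$ of small side $\delta$, and replace the integral over each $Q$ by a Riemann-type sum built from weighted vectors $|Q|^{1/2}k_{x_Q}$. The error is controlled using \eqref{eq:wl_intro} together with a local continuity estimate on the kernel. That continuity estimate is not among the stated hypotheses, so here I would follow the earlier paper's approach of averaging, or of choosing many points per cube. The outcome is a frame $\{k_\lambda\}_{\lambda\in\Gamma}$ with bounds close to $1$ after weighting.

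I would rather avoid smoothness altogether. The alternative is to discretize the Parseval identity on each large block $B$ as a sum of finitely many rank-one operators $\sum_j w_j\, k_{x_j}\otimes k_{x_j}$ with small norms $w_j\|k_{x_j}\|^2\le \eta$. Smallness of the norms comes from refining, and \eqref{eq:dc_intro} bounds $\|k_x\|^2$ above, so refining does make each term small.

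\textbf{Step 3: selection.} Apply the selector form of Weaver's conjecture to the vectors $\sqrt{w_j}k_{x_j}$, which form a (near) Parseval frame with all norms at most $\eta$. This produces a subset $J$ such that $\{\sqrt{w_j}k_{x_j}\}_{j\in J}$ has frame bounds $\alpha\pm C\sqrt{\eta}$, where $\alpha$ is the fraction of weight retained.

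The counting is then local. In each ball $B_r(x)$ the retained weight is approximately
\[
\alpha\int_{B_r(x)} k(y,y)\,dy .
\]
The weights $w_j$ are comparable to $1/\#$points per unit of $k(y,y)$-mass. Taking $\alpha$ slightly above $1/(\text{number of points per unit mass})$ therefore makes the number of selected points per unit $k(y,y)$-mass close to $1+\varepsilon$, provided the selection is done so that local counts are controlled. For that I would use the version of the selector theorem with prescribed local proportions, i.e.\ partition-type selection with each cell's share controlled. The weights are then dropped: they are comparable to constants, so the unweighted $\{k_\lambda\}_{\lambda\in J}$ is still a frame.

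\textbf{Main obstacle.} The hardest step is making the Weaver selection give a \emph{uniform} upper density bound on every ball $B_r(x)$, not just a global proportion. Weaver's theorem applies to finite-dimensional or infinite Bessel families, but a priori it controls only operator norms and says nothing about local cardinalities. I would first try the strategy of the prior work. Cut $\mathbb{R}^n$ into large cubes, use \eqref{eq:wl_intro} and \eqref{eq:hap_intro} to nearly decouple them, apply the selector theorem to partition each cube's points into about $N$ classes each giving frame bounds near $1/N$ of the total, and in each cube keep one class. The cardinality per cube is then about $\#(\text{points})/N$, which is uniform. Gluing back uses the HAP tails. The delicate part is that the boundary contributions vanish in the limit $r\to\infty$, which gives $D_0^+\le 1+\varepsilon$.
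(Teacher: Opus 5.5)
Your proposal has a genuine gap. Step~3 cannot produce density $1+\varepsilon$, and this, not the local counting, is the real obstacle. Kadison--Singer/Weaver-type selection controls the retained frame operator only up to an error that becomes comparable to the retained part exactly in the regime you need. In the sharp form of Theorem~\ref{thm:binary}, a selector $I_b$ of order $N$ from a Parseval family with norms $\le\eta$ satisfies $2^{-N}(1-c_0\sqrt{2^N\eta})\le S_{I_b}\le 2^{-N}(1+c_0\sqrt{2^N\eta})$. A positive lower bound therefore forces $2^N\eta<c_0^{-2}$. Your weighted family discretizes $\int k_x\otimes k_x\,dx$, so a large ball $B$ carries total weight $\sum_{x_j\in B}w_j\|k_{x_j}\|^2\approx\mu_0(B)=\int_B k(y,y)\,dy$, with every summand $\le\eta$. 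Hence $B$ contains at least about $\mu_0(B)/\eta$ sample points, and the selector keeps about $\mu_0(B)/(2^N\eta)>c_0^2\,\mu_0(B)$ of them. However fine the discretization, one selection step leaves redundancy at least $c_0^2>1$. That is the constant-factor phenomenon of \cite{nitzan2016exponential}, not $1+\varepsilon$. Your own bookkeeping already shows the failure: with frame bounds $\alpha\pm C\sqrt{\eta}$ and $\alpha\approx(1+\varepsilon)\eta$, the lower bound $\alpha-C\sqrt{\eta}$ is negative for small $\eta$. The cube-by-cube variant in your last paragraph has the same defect. Classes carrying about $1/N$ of the frame operator with small relative error need $N\eta$ small, i.e.\ about $1/(N\eta)\gg 1$ points per unit mass. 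By contrast, the uniform control of local counts that you call the main obstacle is the easy part: pairing points inside the cells of a regular partition handles it, as in Proposition~\ref{geo}.

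What is missing is a mechanism that pushes the redundancy from a constant down to $1+\varepsilon$. The paper does not reprove this statement; it imports it from \cite{bownik2025redundancy}. It does indicate that the argument there is driven by the frame measure/density property \eqref{eq:frd} combined with iterated selectors. The point of \eqref{eq:frd} is this: for a frame $\{k_\lambda\}_{\lambda\in\Lambda}$ with frame operator $S$, averages of $\langle k_\lambda,S^{-1}k_\lambda\rangle=\|S^{-1/2}k_\lambda\|^2$ over large balls are governed by $1/D_0^{\pm}(\Lambda)$. So while the density is well above $1$, a positive proportion of elements have $\|S^{-1/2}k_\lambda\|^2$ bounded away from $1$ and are genuinely redundant (compare the averaging in Lemma~\ref{lem:upper_lambda_a}). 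Such elements can be discarded in controlled batches. For instance, discarding $K$ preserves the frame property exactly when the Naimark-complement vectors indexed by $K$ form a Riesz sequence. Selector theorems and the regularized partitions of Section~\ref{sec:rkhs} handle the density bookkeeping, and iterating drives the density toward $1+\varepsilon$. A single selection from a fine discretization has no such feedback.

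Step~2 also needs more than you give. Under only \eqref{eq:dc_intro}, \eqref{eq:wl_intro} and \eqref{eq:hap_intro}, producing a discrete frame of bounded density is itself a Kadison--Singer-type discretization problem (cf.\ \cite{freeman2019discretization}). Local trace-norm errors on infinitely many blocks do not add up to a small operator-norm error without further localization.
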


Theorem \ref{thm:frame} (and its more general version in \cite{bownik2025redundancy}) yielded, among others, the existence of frames near the critical density in the settings of exponential systems on unbounded sets, nonlocalized Gabor systems and spectral subspaces of ellptic differential operators. In each of these settings, the existence of a frame near the critical density was an open problem.

\subsection{New results}
In the present paper, we will complement Theorem~\ref{thm:frame} with a dual result on the existence of Riesz sequences near the critical density. So far, the existence of Riesz sequences near the critical density seems only known for exponential systems on bounded/compact sets \cite{agora2015multi, marzo2006riesz, vershynin2000coordinate}, localized Gabor systems \cite{casazza2012infinite} and weighted Fock spaces of entire functions \cite{grochenig2019strict}. Even in the special cases of exponential systems and Gabor systems, the existence of Riesz sequences near the critical density are not covered in the literature in full generality.

Our first result provides Riesz sequences near the critical density in the general setting of Theorem~\ref{thm:necessary_intro}.

\begin{theorem} \label{thm:riesz}
Under the assumptions of Theorem \ref{thm:necessary_intro}, the following assertion holds:
For every $\varepsilon > 0$, there exists $\Lambda \subseteq \mathbb{R}^n$ satisfying
 \[
  D^-_0 (\Lambda) \geq 1 - \varepsilon
 \]
 and such that $\{k_{\lambda} \}_{\lambda \in \Lambda}$ is a Riesz sequence in $\mathcal{H}$.
\end{theorem}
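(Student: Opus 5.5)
We would build the Riesz sequence by selection from a large frame, using the same selector form of Weaver's conjecture that underlies Theorem~\ref{thm:frame} in \cite{bownik2025redundancy}, but applied to the Gram (analysis) side rather than the synthesis side.

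\textbf{Step 1: a tight-frame-like starting system.} For $\delta>0$, choose a fine relatively separated set $\Gamma=\Gamma_\delta$, for instance points of a lattice of mesh $\delta$, with weights $w_\gamma$ approximating $\int_{Q_\gamma} k(y,y)\,dy$, where $Q_\gamma$ is the corresponding cell. Using \eqref{eq:dc_intro} and \eqref{eq:wl_intro}, the discretized operator $\sum_\gamma |Q_\gamma|\, k_\gamma\otimes k_\gamma$ is close in operator norm to the identity on $\mathcal{H}$, namely $\int k_y\otimes k_y\,dy = I$. Hence $\{|Q_\gamma|^{1/2}k_\gamma\}$ is a frame with bounds in $[1-\eta,1+\eta]$. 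The vectors $u_\gamma=|Q_\gamma|^{1/2}k_\gamma$ have small norms, $\|u_\gamma\|^2\le C_2|Q_\gamma|$, and $\sum_{\gamma\in Q}\|u_\gamma\|^2\approx \int_Q k(y,y)\,dy$ on large cubes.

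\textbf{Step 2: partition into near-Riesz pieces.} We apply the selector Weaver/Kadison--Singer partition result to $\{u_\gamma\}$, in its infinite-dimensional version from \cite{bownik2025redundancy}. For each $N$ there is a partition $\Gamma=\bigcup_{j=1}^N\Gamma_j$ such that $\sum_{\gamma\in\Gamma_j}u_\gamma\otimes u_\gamma\le (1/N+C\sqrt{\delta})I$. The version needed here is the local-density refinement: every $\Gamma_j$ should carry about a $1/N$ share of the weight in every large ball. Rescaling, $\{N^{1/2}u_\gamma\}_{\gamma\in\Gamma_j}$ is Bessel with bound close to $1$. Its frame operator restricted to its span has trace per unit volume roughly $N\cdot\frac1N=1$ times the density, while the bound is at most $1+O(\sqrt{N\delta})$.

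\textbf{Step 3: extract Riesz sequences.} The goal is a subset $\Lambda$ with $\{k_\lambda\}$ a Riesz sequence and counting density at least $(1-\varepsilon)\int k(y,y)\,dy$. The plan is to do the selection in the dual, Gram-matrix picture. Consider the Gram matrix $G=(\langle u_\gamma,u_{\gamma'}\rangle)$, which is, up to small error, a projection $P$ on $\ell^2(\Gamma)$ with diagonal entries $\|u_\gamma\|^2$. Taking the complementary projection $I-P$ (diagonal $1-\|u_\gamma\|^2$) and applying the Weaver selector to its columns yields a subset $\Lambda$ on which $I-P$ has small norm when compressed. Equivalently, the compression of $P$ to $\ell^2(\Lambda)$ is bounded below, which is exactly the Riesz sequence property for $\{u_\lambda\}_{\lambda\in\Lambda}$, hence for $\{k_\lambda\}$ after normalizing via \eqref{eq:dc_intro}. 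The selector version guarantees that $\Lambda$ captures a fraction of points in every cube proportional to the local trace of $P$, which is $\sum\|u_\gamma\|^2\approx\int k(y,y)$. That fraction gives $D_0^-(\Lambda)\ge 1-\varepsilon$ once $\delta\to0$ and the Weaver parameter are tuned. \eqref{eq:hap_intro} is used to pass from local (cube-wise) estimates to global operator bounds and to control the error between the discretized $G$ and a true projection.

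\textbf{Main obstacle.} The crux is Step 3: obtaining a \emph{uniform lower} density bound for the selected set in every ball, not just on average. Weaver-type results control norms, whereas density requires controlling traces locally. We would first try the same localization scheme as in \cite{bownik2025redundancy}. This means partitioning $\mathbb{R}^n$ into large cubes and applying the finite-dimensional selector result, in its version with prescribed selection proportions, to the compressed projections on each cube. The local pieces are then glued using \eqref{eq:wl_intro} and \eqref{eq:hap_intro}, which make the off-diagonal interactions between distant cubes arbitrarily small, so that a Neumann-series perturbation preserves the lower Riesz bound.
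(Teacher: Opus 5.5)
There is a genuine gap, and it is in Step~3, the heart of your argument.

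Your starting system is massively redundant: its counting density is of order $\delta^{-n}$, while the critical density is $1$. So in your Gram/Naimark picture the diagonal entries $\langle Pe_\gamma,e_\gamma\rangle=\|u_\gamma\|^2\le C_2|Q_\gamma|$ are tiny. This has two consequences.
\begin{itemize}
\item The columns of $\mathbf{I}-P$ satisfy $\|(\mathbf{I}-P)e_\gamma\|^2=1-\|u_\gamma\|^2\approx 1$. Theorem~\ref{thm:binary} needs $\|g_i\|^2\le\delta$ with $2^N<1/\delta$, so it does not apply to these vectors at all.
\item The compression of $\mathbf{I}-P$ to any $\ell^2(\Lambda)$ has norm at least $1-\inf_{\lambda\in\Lambda}\|u_\lambda\|^2$, so it can never be ``small''.
\end{itemize}
Your Step~2 does not help: the rescaled vectors $N^{1/2}u_\gamma$ must still have small norm for the Weaver error to be small, so the same obstruction persists.

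More fundamentally, Weaver-type selectors cut a system into $2^N$ pieces of comparable size, so they can only discard a $2^{-N}$-fraction of the points. They cannot extract a subset of relative size about $\delta^n$. Nor do they give the property you claim, that $\Lambda$ picks up about $\int_Q k(y,y)\,dy$ points in every cube; that is restricted-invertibility territory. If you went that way, your gluing step would become a second gap. \eqref{eq:hap_intro} only makes the $\ell^2$-tails of the rows of the Gram matrix small. That does not make the inter-cube part small in operator norm, so the Neumann-series argument is unjustified.

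Step~1 is also unsupported. \eqref{eq:dc_intro}, \eqref{eq:wl_intro} and \eqref{eq:hap_intro} give no oscillation control of $x\mapsto k_x$ that would make a fine lattice a near-tight frame. This is secondary, though, since any starting frame would do.

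The missing idea is this: the Naimark complement consists of small vectors only when the Parseval frame vectors have norm close to $1$. By the frame measure/density property, that forces the starting frame to be already near the critical density. The paper proceeds as follows.
\begin{enumerate}
\item It starts from a frame $\{k_\gamma\}_{\gamma\in\Gamma}$ with $D_0^+(\Gamma)\le 1+\ve'$ (Theorem~\ref{thm:frame}) and passes to the canonical Parseval frame $h_\gamma=S_\Gamma^{-1/2}k_\gamma$.
\item By Lemma~\ref{lem:upper_lambda_a}, the set $\{\gamma:\|h_\gamma\|^2\le 1-\alpha\}$ has upper density at most $\ve'/\alpha$. Discarding it leaves $\Gamma'$ with $D_0^-(\Gamma')\ge 1-\ve/2$.
\item It completes $\{h_\gamma\}_{\gamma\in\Gamma'}$ to a Parseval frame using added vectors with $\|\varphi_n\|^2=1-\alpha$ (Lemma~\ref{lem:bessel_completion}). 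The Naimark complement vectors then satisfy $\|\widetilde{h_\gamma}\|^2,\|\widetilde{\varphi_n}\|^2\le\alpha$.
\item A binary selector with $2^N$ of order $1/(c_0^2\alpha)$ gives a piece $I_b$ that is a frame for the complement with lower bound $2^{-N-1}$. The remaining indices therefore give a Riesz sequence.
\item Proposition~\ref{geo} chooses the pair partitions inside the cells of a regular partition. This guarantees that removing $I_b$ costs only a $2^{-N}$-fraction of the density.
\end{enumerate}
No cube-by-cube gluing is needed, because Theorem~\ref{thm:binary} is global and the density control is purely combinatorial.
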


Theorem \ref{thm:necessary_intro} recovers, in particular, the aforementioned results on exponential systems on bounded/compact sets \cite{agora2015multi, marzo2006riesz, vershynin2000coordinate}, localized Gabor systems \cite{casazza2012infinite} and weighted Fock spaces of entire functions \cite{grochenig2019strict}. In addition, it provides a new result on the existence of interpolation sets near the critical density in spectral subspaces of elliptic differential operators \cite{grochenig2024necessary}.

Aside Theorem \ref{thm:riesz}, we show the following theorem on exponential frames for possibly unbounded sets.

\begin{theorem} \label{thm:exponential_intro}
 Let $\Omega \subseteq \R^d$ be a set of finite measure. For every $\ve > 0$, there exists $\Lambda \subseteq \R^d$ satisfying
 $
  D^- (\Lambda)   \geq (1-\varepsilon) |\Omega|
 $
and such that $\{ e^{2\pi i \lambda \cdot } \mathds{1}_{\Omega} \}_{\lambda \in \Lambda}$ is a Riesz sequence in $L^2 (\Omega)$. 
\end{theorem}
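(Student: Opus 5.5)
We reduce Theorem~\ref{thm:exponential_intro} to Theorem~\ref{thm:riesz} by passing to the Paley--Wiener space. Taking the Fourier transform, the system $\{e^{2\pi i \lambda \cdot}\mathds{1}_\Omega\}_{\lambda\in\Lambda}$ is a Riesz sequence in $L^2(\Omega)$ exactly when the reproducing kernels $\{k_\lambda\}_{\lambda \in \Lambda}$ form a Riesz sequence in
\[
\PW_\Omega=\{f\in L^2(\R^d) : \operatorname{supp}\hat f\subseteq \Omega\}.
\]
Here $k_x(y)=\check{\mathds{1}}_\Omega(y-x)$. Since this correspondence is unitary, it is enough to produce the desired $\Lambda$ for $\PW_\Omega$. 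Moreover $k(y,y)=\|k_y\|^2=|\Omega|$ is constant, so $\int_{B_r(x)}k(y,y)\,dy=|\Omega|\,|B_r(x)|$. It follows that $D_0^-(\Lambda)=D^-(\Lambda)/|\Omega|$, and the claim $D^-(\Lambda)\ge(1-\varepsilon)|\Omega|$ is exactly $D^-_0(\Lambda)\ge 1-\varepsilon$.

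We then verify the three hypotheses of Theorem~\ref{thm:necessary_intro}.

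\textbf{(DC).} This is immediate, with $C_1=C_2=|\Omega|>0$; if $|\Omega|=0$ the statement is trivial.

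\textbf{(WL).} By translation invariance, $\int_{\R^d\setminus B_r(x)}|\langle k_x,k_y\rangle|^2\,dy=\int_{|z|\ge r}|\check{\mathds 1}_\Omega(z)|^2\,dz$. Since $\check{\mathds 1}_\Omega\in L^2$ by Plancherel ($|\Omega|<\infty$), this tail tends to $0$ as $r\to\infty$, uniformly in $x$. No boundedness of $\Omega$ is needed here.

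\textbf{(HAP).} This is the main obstacle, because $\check{\mathds 1}_\Omega$ need not decay when $\Omega$ is unbounded. We need that if $\{k_\lambda\}$ is Bessel, then $\sup_x\sum_{\lambda\notin B_r(x)}|\check{\mathds 1}_\Omega(\lambda-x)|^2\to0$.

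The first attempt uses that the Bessel property forces $\Lambda$ to be relatively separated. Indeed, $\sum_{\lambda}|\langle k_x,k_\lambda\rangle|^2\le B\|k_x\|^2$, and near the diagonal $|\check{\mathds 1}_\Omega(z)|\ge |\Omega|/2$ for $|z|\le\delta$; hence each ball of radius $\delta$ contains boundedly many points of $\Lambda$. For a relatively separated set, the sum $\sum_{\lambda\notin B_r(x)}|g(\lambda-x)|^2$ is controlled by a local-maximum (Wiener amalgam) norm of $g$ outside $B_{r-1}(0)$. The function $g=\check{\mathds 1}_\Omega$ lies in $L^2$ but not necessarily in $W(L^\infty,\ell^2)$, so this route may fail as stated.

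The fix is an approximation argument. Choose a bounded $\Omega'\subseteq\Omega$ with $|\Omega\setminus\Omega'|<\eta$, and write $k_x=k_x'+k_x''$, where $k'_x$ is the kernel of $\PW_{\Omega'}$, i.e.\ $k'_x=P_{\PW_{\Omega'}}k_x$. The smooth, bounded-spectrum part $\check{\mathds 1}_{\Omega'}$ is in the amalgam space, so its tail sum over a relatively separated set is small for large $r$, uniformly in $x$. The remainder $k''_x$ has norm at most $\sqrt\eta$, and the Bessel bound of $\{k_\lambda\}$ gives
\[
\sum_\lambda|\langle k''_x,k_\lambda\rangle|^2\le B\eta .
\]
Combining the two parts yields (HAP).

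Alternatively, (HAP) for Bessel sequences of reproducing kernels is known in this generality, since the paper mentions exponential systems on unbounded sets as covered; we would cite \cite{fuehr2017density} or \cite{bownik2025redundancy} if it is available there.

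With all hypotheses verified, Theorem~\ref{thm:riesz} yields $\Lambda$ with $D_0^-(\Lambda)\ge1-\varepsilon$ and $\{k_\lambda\}$ a Riesz sequence in $\PW_\Omega$. Transporting back via the Fourier transform gives Theorem~\ref{thm:exponential_intro}.
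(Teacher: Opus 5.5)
Your argument is correct, but it takes a different route from the paper. The paper never verifies (HAP) for unbounded $\Omega$. In Section~\ref{sec:examples} it cites \cite[Theorem 6.1]{bownik2025redundancy}, which gives the frame measure/density property \eqref{eq:frd} for the kernels of $\PW_\Omega$ for every $\Omega$ of finite measure. It then applies Theorem~\ref{thm:main} directly to the Parseval frame $\{k_x\}_{x\in\R^d}$ with $\|k_x\|^2=|\Omega|$, which also yields the Riesz bounds $c_1(\varepsilon)|\Omega|$, $c_2|\Omega|$ of Theorem~\ref{thm:exponential}.

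You instead prove (HAP) for every finite-measure $\Omega$ by splitting $k_x=P_{\PW_{\Omega'}}k_x+(\mathbf{I}-P_{\PW_{\Omega'}})k_x$ with $\Omega'\subseteq\Omega$ bounded. The second piece is controlled by the Bessel bound together with $\|(\mathbf{I}-P_{\PW_{\Omega'}})k_x\|^2=|\Omega\setminus\Omega'|$, and the first by the decay of bounded-spectrum kernels. Theorem~\ref{thm:riesz}, i.e.\ Theorem~\ref{thm:framemeasure} followed by Theorem~\ref{thm:main}, then finishes the proof.

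Your route is more self-contained and shows that unbounded spectra already fit the (DC)+(WL)+(HAP) framework of the introduction, at the cost of a short extra argument. The paper's route is a one-line appeal to a specialized earlier result.

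You can shorten your (HAP) step. Note that $\langle k'_x,k_\lambda\rangle=\langle k'_x,k'_\lambda\rangle$, and $\{k'_\lambda\}_{\lambda\in\Lambda}=\{P_{\PW_{\Omega'}}k_\lambda\}_{\lambda\in\Lambda}$ is Bessel in $\PW_{\Omega'}$ with the same bound. So the known (HAP) for bounded spectra \cite[Lemma 1]{grochenig1996landau} handles the first piece directly, and the relative-separation and amalgam estimates are not needed. The closing hedge about citing the literature can then be dropped.
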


We mention that Theorem \ref{thm:exponential_intro} is optimal, in the sense that Riesz sequences $\{ e^{2\pi i \lambda \cdot } \mathds{1}_{\Omega} \}_{\lambda \in \Lambda}$ with critical density $D^+ (\Lambda) = |\Omega|$ might not exist for certain spectra $\Omega \subseteq \R^d$, cf. \cite{kozma2023set, enstad2025exponential}. In addition to the mere existence of Riesz sequence near the critical density, we also show their frame bounds can be chosen to involve the measure of the spectrum, cf. Theorem~\ref{thm:exponential}.

Lastly, we state the following theorem on nonlocalized Gabor systems.

\begin{theorem} \label{thm:gabor_intro}
Let $g \in L^2 (\mathbb{R}^d)$ be nonzero. For every $\varepsilon > 0$, there exists  $\Lambda \subseteq \mathbb{R}^{d} \times \mathbb{R}^d$ satisfying
\[
 D^- (\Lambda) \geq 1 - \varepsilon
\]
and such that $\big\{ e^{2\pi i \lambda_1 \cdot} g(\cdot - \lambda_2) \big\}_{(\lambda_1, \lambda_2) \in \Lambda}$ is a Riesz sequence in $L^2 (\mathbb{R}^d)$.
\end{theorem}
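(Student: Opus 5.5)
We deduce Theorem~\ref{thm:gabor_intro} from Theorem~\ref{thm:riesz}, or rather from its general metric-measure-space version, applied to the reproducing kernel Hilbert space of the short-time Fourier transform. Set $\mathcal{H}_g := V_g(L^2(\R^d)) \subseteq L^2(\R^{2d})$, where $V_g f(x,\xi) = \langle f, \pi(x,\xi) g\rangle$ and $\pi(x,\xi)g = e^{2\pi i \xi\cdot}g(\cdot - x)$. After normalizing $\|g\|_2=1$, the orthogonality relations make $V_g$ an isometry onto $\mathcal{H}_g$. Hence $\mathcal{H}_g$ is a reproducing kernel Hilbert space with kernels $k_{(x,\xi)} = V_g(\pi(x,\xi)g)$, and $\langle k_z, k_w\rangle = \langle \pi(w)g, \pi(z)g\rangle$.

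Since $V_g$ is unitary onto $\mathcal{H}_g$, $\{k_\lambda\}_{\lambda\in\Lambda}$ is a Riesz sequence in $\mathcal{H}_g$ if and only if $\{\pi(\lambda)g\}_{\lambda \in \Lambda}$ is a Riesz sequence in $L^2(\R^d)$; the coordinate ordering $(\lambda_1,\lambda_2)$ in the theorem is just a relabeling of phase space. Moreover $k(z,z) = \|g\|_2^2 = 1$, so $D_0^{\pm}$ coincides with the usual Beurling density $D^{\pm}$ on $\R^{2d}$. Thus it suffices to verify (DC), (WL) and (HAP) for these kernels.

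\textbf{(DC).} This is immediate, since $\|k_z\|^2 = 1$.

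\textbf{(WL).} By translation covariance, $|\langle k_z, k_w\rangle| = |V_g g(w-z)|$. Since $V_g g \in L^2(\R^{2d})$, the quantity
\[
\int_{\R^{2d}\setminus B_r(z)} |\langle k_z,k_w\rangle|^2\,dw = \int_{|u|\ge r}|V_gg(u)|^2\,du
\]
is independent of $z$ and tends to $0$ as $r\to\infty$. This gives (WL) without any localization assumption on $g$.

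\textbf{(HAP).} This is the main obstacle, because $g$ is merely $L^2$ and the pointwise decay of $V_g g$ is unavailable. We would invoke the homogeneous approximation property for Gabor Bessel sequences with arbitrary $g \in L^2$, as in \cite{christensen1999density} and \cite{ramanathan1995incompleteness}. Suppose instead we must argue directly. If $\{\pi(\lambda)g\}$ is Bessel, then $\Lambda$ is relatively separated, so $\sup_x \#(\Lambda\cap B_1(x))<\infty$. We then approximate $g$ by a Schwartz function $h$ with $\|g-h\|_2<\delta$ and write
\[
\langle \pi(\lambda)g,\pi(z)g\rangle = \langle \pi(\lambda)g, \pi(z)(g-h)\rangle + \langle \pi(\lambda) g,\pi(z)h\rangle.
\]
The first term has square-sum over $\lambda$ at most $B\delta^2$, where $B$ is the Bessel bound. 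For the second term, $\langle \pi(\lambda)g,\pi(z)h\rangle = V_h g(\lambda - z)$ up to a phase. Since $V_hg \in M^{2}$ and $h$ is nice, $V_h g$ belongs to the Wiener amalgam $W(L^\infty,\ell^2)$. Summing it over a relatively separated set outside $B_r(z)$ is therefore small uniformly in $z$ for $r$ large. This yields (HAP).

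Once the three conditions hold, Theorem~\ref{thm:riesz} produces, for each $\varepsilon>0$, a set $\Lambda$ with $D^-(\Lambda) = D^-_0(\Lambda)\ge 1-\varepsilon$ for which $\{k_\lambda\}$ is a Riesz sequence in $\mathcal{H}_g$. By the unitary equivalence above, $\{\pi(\lambda)g\}_{\lambda\in\Lambda}$ is then a Riesz sequence in $L^2(\R^d)$, which is the claim.
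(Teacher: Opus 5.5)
Your argument is correct, but it reaches the Gabor case by a different route from the paper. The paper never turns $\{\pi(\lambda)g\}$ into reproducing kernels. It fixes an auxiliary unit window $\phi$ (e.g.\ the Gaussian) and works in $V_\phi(L^2(\R^d))$, whose kernel satisfies $k(z,z)=1$. There it treats $g_z=V_\phi\pi(z)g$ as a general continuous frame with $\|g_z\|^2=\|g\|^2$. The frame measure/density property of $\{g_z\}$ is quoted from \cite[Theorem 3]{balan2006density2}; for Gaussian $\phi$ it alternatively follows from the HAP of \cite{ramanathan1995incompleteness} via Theorem~\ref{thm:framemeasure}. Theorem~\ref{thm:main} is then applied to this non-kernel system.

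You instead use $g$ as its own window, so that $\{\pi(\lambda)g\}$ is exactly the kernel system of $V_g(L^2(\R^d))$, and the kernel-level Theorem~\ref{thm:riesz} suffices. The cost is that both entries of $\langle\pi(\lambda)g,\pi(z)g\rangle$ now carry the nonlocalized $g$. That is why you need the extra approximation $g\approx h\in\mathcal{S}(\R^d)$. The error term is absorbed by the Bessel bound, and the main term is controlled uniformly in $z$ by $V_hg\in W(L^\infty,L^2)$ together with the relative separation of $\Lambda$. This step is sound, and (WL) is indeed trivial since $V_gg\in L^2(\R^{2d})$.

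Your route is more self-contained. It shows that for arbitrary nonzero $g\in L^2$ the kernel of $V_g(L^2)$ already satisfies (DC), (WL) and (HAP). So you need neither the Balan--Casazza--Heil--Landau frame-measure theorem nor the non-kernel generality of Theorem~\ref{thm:main}. The paper's route is shorter given the cited results, and it keeps the ambient kernel well localized, which shows off the general form of Theorem~\ref{thm:main}.

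Two small corrections in wording:
\begin{enumerate}
\item The amalgam fact should read ``$g\in L^2=M^2$ and $h\in\mathcal{S}$ imply $V_hg\in W(L^\infty,L^2)$.'' Membership of $V_hg$ in $M^2(\R^{2d})=L^2(\R^{2d})$ alone gives no such control.
\item The inner product $\langle\pi(\lambda)g,\pi(z)h\rangle$ is a unimodular multiple of $V_hg(z-\lambda)$, not $V_hg(\lambda-z)$. This is harmless, since the reflected function lies in the same amalgam space.
\end{enumerate}
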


Like Theorem~\ref{thm:exponential_intro}, the conclusion of Theorem~\ref{thm:gabor_intro} is optimal for general Gabor frames. Indeed, by the Balian-Low theorem \cite{grochenig2015deformation}, there do not exist localized Gabor Riesz sequences (e.g., with a  function $g \in M^1 (\R^d)$) with critical density $D^+ (\Lambda) = 1$.

We derive both Theorem~\ref{thm:exponential_intro} and Theorem~\ref{thm:gabor_intro} from a more general version of Theorem~\ref{thm:riesz}; see Theorem~\ref{thm:main} in the main text.

\subsection{Methods}
The proof methods used in this paper are based on similar techniques as used in our paper \cite{bownik2025redundancy} and combined with Naimark's dilation theorem. Specifically, it uses various techniques for studying the overcompleteness of frames, such as the frame measure  \cite{balan2006density, balan2007measure, caspers2023overcompleteness}, and combines it with an iterative selector form of Weaver's conjecture \cite{bownik2024selector}, which builds on the solution of the Kadison-Singer conjecture \cite{marcus2015interlacing}. See also \cite{nitzan2016exponential, freeman2019discretization} for nonselector forms of iterative Weaver's conjecture.

The starting point of the proof of Theorem~\ref{thm:riesz} (resp.  Theorem~\ref{thm:main}) is Theorem~\ref{thm:frame} (resp. \cite[Theorem 5.4]{bownik2025redundancy}), which provides us with a frame near the critical density. We then remove an adequate portion of this frame that leaves us with a Bessel sequence with density below the critical density for frames, but still above the asserted density claim in Theorem~\ref{thm:riesz}. In order to obtain a Riesz sequence from this Bessel sequence, we then apply a selector form of Weaver's conjecture (see Theorem~\ref{thm:binary}) to a Naimark complement of this Bessel sequence, which allows us to extract a Riesz sequence while still controlling its Beurling density. A similar general strategy for thinning out a frame to obtain a Riesz sequence with adequate properties can be found in \cite[Corollary 6.4]{bownik2024selector}.
The challenge in this strategy for our purposes is to remove a sufficiently large portion of the frame near the critical density while still controlling its density.

\subsection{Organization} Section \ref{sec:prelim} provides preliminary results on frames and Riesz sequences and recalls the relevant results on Naimark complements and Weaver's conjecture used in the main text. The setting of reproducing kernel Hilbert spaces on metric measure spaces will be outlined in Section \ref{sec:rkhs}. In addition, this section provides various new lemmata on Beurling densities. Our main result is proven in Section \ref{sec:main}. Lastly, applications of our main results to various examples are discussed in Section \ref{sec:examples}.

\section{Preliminaries on frames and Riesz sequences} \label{sec:prelim}
Throughout this section, $\mathcal{H}$ denotes a separable Hilbert space and $I$ a countable index set.

\subsection{Frames and Riesz sequences}
A system $\{g_i\}_{i \in I}$ of vectors $g_i \in \mathcal{H}$ is said to be a \emph{Bessel sequence} in $\mathcal{H}$ if there exists a constant $B>0$, called a \emph{Bessel bound}, such that
\[
\sum_{i \in I} |\langle f, g_i \rangle |^2 \leq B \| f \|^2 \quad \text{for all} \quad f \in \mathcal{H}.
\]
Equivalently, the system $\{g_i\}_{i \in I}$ has Bessel bound $B > 0$ if it satisfies
\[
\bigg\| \sum_{i \in I} c_i g_i \bigg\|^2 \leq B \| c \|^2 \quad \text{for all} \quad c \in \ell^2 (I).
\]
Both statements are equivalent to the associated frame operator $S_I := \sum_{i \in I} \langle \cdot , g_i \rangle g_i$ being bounded on $\mathcal{H}$ with operator norm $\| S_I \| \leq B$.

A Bessel sequence $\{g_i\}_{i \in I}$ with Bessel bound $B > 0$ is called a \emph{frame} for $\mathcal{H}$ if there exists $A > 0$, called a \emph{lower frame bound}, such that
\[
A \| f \|^2 \leq \sum_{i \in I} |\langle f, g_i \rangle |^2 \leq B \| f \|^2 \quad \text{for all} \quad f \in \mathcal{H}.
\]
A frame whose frame bounds can be chosen $A = B =1$ is called a \emph{Parseval frame} for $\mathcal{H}$. If $\{g_i \}_{i \in I}$ is a frame, then its frame operator $S_I$ is invertible, and $\{ S_I^{-1} g_i \}_{i \in I}$ and $\{S_I^{-1/2} g_i \}_{i \in I}$ are the \emph{canonical dual frame} and \emph{canonical Parseval frame} associated to $\{g_i\}_{i \in I}$, respectively.

Lastly, a Bessel sequence $\{g_i\}_{i \in I}$ with Bessel bound $B>0$ is called a \emph{Riesz sequence} in $\mathcal{H}$ if there exists $A>0$, called a \emph{lower Riesz bound}, such that
\[
A \| c \|^2 \leq \bigg\| \sum_{i \in I} c_i g_i \bigg\|^2 \leq B \| c \|^2 \quad \text{for all} \quad c \in \ell^2 (I).
\]
We refer to \cite{young2001introduction} for further basic background on frames and Riesz sequences.

We will always treat a system $\{g_i \}_{i \in I}$ as an indexed family and allow for repetitions.

\subsection{Naimark's complements}

We will use the following well-known form of Naimark's dilation theoren, cf. \cite[Proposition 1.1]{han2000frames}.

\begin{lemma}[\cite{han2000frames}] \label{lem:naimark}
Let $\{g_i\}_{i \in I}$ be a Parseval frame for $\mathcal{H}$. Then $\ell^2 (I)$ contains $\mathcal{H}$ isometrically as a closed subspace such that $g_i = P e_i$ for $i \in I$, where $(e_i)_{i \in I}$ denotes the standard basis of $\ell^2 (I)$ and  $P$ denotes the orthogonal projection from $\ell^2(I)$ onto $\mathcal{H}$.
\end{lemma}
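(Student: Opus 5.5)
The plan is the standard Naimark construction: realize $\mathcal{H}$ as the range of the synthesis adjoint inside $\ell^2(I)$, and check that the orthogonal projection onto this range maps each basis vector $e_i$ to $g_i$.

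\textbf{Step 1: the isometric copy of $\mathcal{H}$.} Consider the analysis operator
\[
T \colon \mathcal{H} \to \ell^2(I), \qquad Tf = (\langle f, g_i\rangle)_{i\in I}.
\]
The Parseval property gives $\|Tf\|^2 = \sum_{i \in I} |\langle f, g_i\rangle|^2 = \|f\|^2$, so $T$ is an isometry. Its range $T(\mathcal{H})$ is therefore a closed subspace of $\ell^2(I)$, isometrically isomorphic to $\mathcal{H}$. We identify $\mathcal{H}$ with $T(\mathcal{H})$ via $T$.

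\textbf{Step 2: the projection.} Since $T$ is an isometry, $T^*T = I_{\mathcal{H}}$. Hence $P := TT^*$ is self-adjoint and idempotent, because $P^2 = T(T^*T)T^* = TT^*$. Its range is exactly $T(\mathcal{H})$, so $P$ is the orthogonal projection of $\ell^2(I)$ onto $T(\mathcal{H})$.

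\textbf{Step 3: computing $Pe_i$.} The adjoint $T^*$ is the synthesis operator, $T^*c = \sum_{j \in I} c_j g_j$. In particular $T^* e_i = g_i$, and so $P e_i = T g_i$. Under the identification of $\mathcal{H}$ with $T(\mathcal{H})$, the vector $Tg_i$ corresponds to $g_i$. This is exactly the claimed identity $g_i = P e_i$.

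No step is a genuine obstacle. The only point requiring care is keeping the identification via $T$ consistent throughout. Concretely, one checks that $\langle Pe_i, Tf\rangle = \langle e_i, Tf\rangle = \langle f, g_i\rangle$ and also $\langle Tg_i, Tf\rangle = \langle g_i, f\rangle$. These agree up to the usual conjugation convention, which confirms that the embedding $T$ intertwines the inner products correctly.
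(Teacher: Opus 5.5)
Your argument is correct and is the standard Naimark construction: embed $\mathcal{H}$ via the isometric analysis operator $T$ and take $P = TT^*$, so that $Pe_i = Tg_i$. This is the proof behind \cite[Proposition 1.1]{han2000frames}, which the paper cites rather than reproduces. One cosmetic slip: in your final check, $\langle e_i, Tf\rangle = \overline{\langle f, g_i\rangle} = \langle g_i, f\rangle$, so the two quantities agree exactly rather than ``up to conjugation,'' and that check is redundant anyway since $Pe_i = TT^*e_i = Tg_i$ already settles the claim.
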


We also mention the following consequence, cf. \cite[Proposition 5.4]{bownik2019improved}.

\begin{lemma}[\cite{bownik2019improved}] \label{lem:naimark}
 Let $(e_i)_{i \in I}$ be the standard basis for $\ell^2 (I)$ and let $P : \ell^2 (I) \to \ell^2(I)$ be an orthogonal projection onto a closed subspace $\mathcal{H} \subseteq \ell^2 (I)$. Then, for any subset $J \subseteq I$ and $\delta > 0$, the following are equivalent:
 \begin{enumerate}[(i)]
  \item $\{P e_i\}_{i \in J}$ is a Riesz sequence with lower bound $\delta$;
  \item $\{(\mathbf{I} - P) e_i) \}_{i \in J}$ is a Bessel sequence with Bessel bound $1-\delta$.
 \end{enumerate}
\end{lemma}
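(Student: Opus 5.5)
The statement is the second lemma: the equivalence between $\{Pe_i\}_{i\in J}$ being a Riesz sequence with lower bound $\delta$ and $\{(\mathbf{I}-P)e_i\}_{i\in J}$ being Bessel with bound $1-\delta$. I would prove it by a direct computation on finitely supported coefficient sequences, using the orthogonal decomposition $e_i = Pe_i + (\mathbf{I}-P)e_i$. Fix $J \subseteq I$ and a finitely supported $c = (c_i)_{i\in J}$, and set $x := \sum_{i\in J} c_i e_i \in \ell^2(I)$. Since $(e_i)$ is orthonormal, $\|x\|^2 = \|c\|^2$. Since $P$ is an orthogonal projection, Pythagoras gives
\[
\|c\|^2 = \|x\|^2 = \|Px\|^2 + \|(\mathbf{I}-P)x\|^2 = \Big\|\sum_{i\in J} c_i Pe_i\Big\|^2 + \Big\|\sum_{i\in J} c_i (\mathbf{I}-P)e_i\Big\|^2 .
\]
This identity is the whole argument.

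From this identity, the inequality $\|\sum c_i Pe_i\|^2 \geq \delta\|c\|^2$ holds if and only if $\|\sum c_i(\mathbf{I}-P)e_i\|^2 \leq (1-\delta)\|c\|^2$. The first inequality is the lower Riesz bound in (i). The second is the synthesis-form Bessel bound in (ii), which by the equivalent formulation of Bessel sequences recalled in Section~\ref{sec:prelim} is the same as being Bessel with bound $1-\delta$.

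I also need to check that the upper Riesz bound required in (i) comes for free. The identity gives $\|\sum c_i Pe_i\|^2 \le \|c\|^2$, so $\{Pe_i\}_{i\in J}$ is automatically Bessel with bound $1$. Likewise $\{(\mathbf{I}-P)e_i\}_{i\in J}$ is always Bessel with bound $1$, so the synthesis operators are bounded in both cases.

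The only remaining point is passing from finitely supported $c$ to all of $\ell^2(J)$. Both sides of each inequality are continuous in $c$, since the synthesis operators are bounded by the previous remark. A density argument therefore extends the equivalence to all $c\in\ell^2(J)$.

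I expect no real obstacle. The one thing to be careful about is that ``Bessel bound'' in (ii) is read in the synthesis form, and the paper has already stated that this is equivalent to the analysis form. I would also note that the degenerate case $\delta\ge 1$ is consistent with the lemma: it forces $(\mathbf{I}-P)e_i=0$ for all $i \in J$, which is exactly the case where (ii) holds with bound $1-\delta \le 0$.
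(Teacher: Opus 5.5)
Your proof is correct, and it is the standard argument. The paper gives no proof of its own and defers to \cite[Proposition 5.4]{bownik2019improved}, whose proof rests on the same Pythagorean identity $\|c\|^2=\bigl\|\sum_{i\in J}c_iPe_i\bigr\|^2+\bigl\|\sum_{i\in J}c_i(\mathbf{I}-P)e_i\bigr\|^2$.

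Two minor points. The density step is unnecessary, since this identity holds directly for every $c\in\ell^2(J)$. Also, for $\delta>1$, both (i) and (ii) simply fail when $J\neq\emptyset$; it is not true that (ii) holds whenever $(\mathbf{I}-P)e_i=0$.
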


\subsection{Completion of Bessel sequences}
The following result on the completion of Bessel sequences to Parseval frames is similar to (part of the proof of) \cite[Lemma 4.3]{bownik2025redundancy}, except for the control on the norms of the added vectors. As this control is essential for our purposes, we include its proof here.

\begin{lemma} \label{lem:bessel_completion}
Let $0 < \beta < 1$. Suppose $\{g_i\}_{i \in I}$ is a Bessel sequence in $\mathcal{H}$ with Bessel bound $1$. Then there exists a Hilbert space $\widetilde{\mathcal{H}}$ containing $ \mathcal{H}$ isometrically as a closed subspace and vectors $\varphi_n \in \widetilde{\mathcal{H}}$, $n \in \mathbb{N}$, satisfying $\| \varphi_n \|^2 = \beta$ for all $n \in \mathbb{N}$, such that the system $\{g_i \}_{i \in I} \cup \{ \varphi_n \}_{n \in \mathbb{N}}$ is a Parseval frame for $\widetilde{\mathcal{H}}$.
\end{lemma}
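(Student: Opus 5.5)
The plan is to complete the frame operator to the identity using a positive operator, and then to split that operator into rank-one pieces of the prescribed norm $\beta$, allowing extra room in a larger space.

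\textbf{Step 1: the defect operator.} Let $S = \sum_{i\in I} \langle \cdot, g_i\rangle g_i$ be the frame operator. Since the Bessel bound is $1$, we have $0 \le S \le \mathbf{I}$. Hence $T := \mathbf{I} - S$ is positive with $\|T\|\le 1$.

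\textbf{Step 2: enlarge the space.} Put $\widetilde{\mathcal{H}} := \mathcal{H} \oplus \ell^2(\mathbb{N})$, and extend $T$ to
\[
\widetilde{T} := T \oplus \mathbf{I}_{\ell^2(\mathbb{N})} .
\]
Then $S \oplus 0 + \widetilde{T} = \mathbf{I}_{\widetilde{\mathcal{H}}}$, and $\widetilde{T}$ is positive, has norm exactly $1 > \beta$, and has infinite-dimensional range. It therefore suffices to find vectors $\varphi_n$ with $\|\varphi_n\|^2 = \beta$ and
\[
\sum_n \langle \cdot, \varphi_n\rangle \varphi_n = \widetilde{T} .
\]

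\textbf{Step 3: decompose $\widetilde{T}$.} Write $\widetilde{T} = \widetilde{T}^{1/2} \widetilde{T}^{1/2}$. For any Parseval frame $\{u_n\}$ of $\widetilde{\mathcal{H}}$, the vectors $\widetilde{T}^{1/2}u_n$ have frame operator $\widetilde{T}$. The task is thus to choose $\{u_n\}$ with $\|\widetilde{T}^{1/2}u_n\|^2 = \beta$ for every $n$.

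My first attempt is to simply make the whole thing easier by enlarging further. Take $\widetilde{\mathcal{H}} = \mathcal{H}\oplus \mathcal{K}$ and $\widetilde{T} = T \oplus \mathbf{I}_{\mathcal{K}}$. Decompose $T = \sum_k \langle \cdot, t_k\rangle t_k$ with $\|t_k\|^2 \le \beta$; this is possible, for instance by taking $T^{1/2}$ applied to an orthonormal basis and splitting each vector into finitely many scaled copies, since $c\,v$ repeated $m$ times with $mc^2 = 1$ reproduces $\langle\cdot,v\rangle v$. For each $k$, pick a unit vector $e_k \in \mathcal{K}$ from an orthonormal system $\{e_k\}$, and set
\[
\varphi_k = t_k + \sqrt{\beta - \|t_k\|^2}\, e_k .
\]
Then $\|\varphi_k\|^2 = \beta$. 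This introduces cross terms, however, and the $\mathcal{K}$-part of the frame operator is not the identity.

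\textbf{Step 4: fix the cross terms.} To repair this, pair the vectors: use $t_k \pm \sqrt{\beta-\|t_k\|^2}\, e_k$, each scaled by $1/\sqrt{2}$. This kills the cross terms, but each norm becomes $\beta/2$. So I instead start with $\|t_k\|^2\le \beta/2$ and use $t_k \pm s_k e_k$, where $s_k^2 = \beta - \|t_k\|^2$, with no $1/\sqrt2$ rescaling. Then
\[
\sum_{\pm} \langle\cdot, t_k\pm s_ke_k\rangle (t_k\pm s_ke_k) = 2\langle\cdot,t_k\rangle t_k + 2s_k^2\langle \cdot, e_k\rangle e_k .
\]
This is fine after replacing $t_k$ by $t_k/\sqrt2$. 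The $\mathcal{K}$-part is the diagonal $\sum_k 2s_k^2 e_k\otimes e_k$, which need not equal $\mathbf{I}_{\mathcal{K}}$.

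\textbf{Step 5: complete on $\mathcal{K}$.} Choose the enlargement so that this diagonal plus further vectors gives a Parseval frame. Each $e_k$ direction has coefficient $2s_k^2 \le 2\beta$. I will arrange $2s_k^2 \le 1$ by splitting more finely. The remaining defect $(1-2s_k^2)\, e_k\otimes e_k$ is then filled by finitely many vectors of norm$^2$ $\beta$, again using the $\pm$ trick with fresh orthonormal directions. Those new directions create new defects in turn, so this iterates. The key observation is that on $\ell^2(\mathbb{N})$ the identity itself admits a decomposition into norm-$\beta$ rank-one pieces. Concretely, take the canonical Parseval frame $\{Pe_j\}$ obtained by compressing an orthonormal basis of a larger space to a subspace of "relative dimension $\beta$" whose diagonal entries all equal $\beta$, for example a Gabor- or harmonic-type equal-norm Parseval frame. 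Any infinite-dimensional space therefore carries an equal-norm Parseval frame with norms $\beta$, which absorbs all leftover defects at once. Alternatively, the cleanest route to the remaining defect is to take $\mathcal{K}$ large and invoke the existence of equal-norm Parseval frames for $\mathcal{K}$ of any norm in $(0,1)$, for instance the discrete Fourier frame restricted to a proportion-$\beta$ subset.

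\textbf{Main obstacle.} The bookkeeping in Steps 4–5 is where I expect the difficulty: producing exactly $\|\varphi_n\|^2=\beta$ while the total frame operator is exactly $\mathbf{I}$, with no residual cross terms.
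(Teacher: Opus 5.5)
Your Steps 1 and 2 are exactly the paper's setup: $\widetilde T=(\mathbf{I}-S)\oplus\mathbf{I}_{\ell^2(\mathbb{N})}$ on $\mathcal H\oplus\ell^2(\mathbb N)$. The gap is in what comes next, which is the whole content of the lemma: writing $\widetilde T=\sum_n\varphi_n\otimes\varphi_n$ with $\|\varphi_n\|^2=\beta$ \emph{exactly}. Steps 3--5 do not achieve this.

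In Step 4, the pair $t_k/\sqrt2\pm s_k e_k$ reproduces $t_k\otimes t_k$ and puts the coefficient $2s_k^2=2\beta-\|t_k\|^2$ on $e_k$. Since $\|t_k\|^2\le\|T\|$, this coefficient exceeds $1$ whenever $\beta>1/2$ and $\|T\|<2\beta-1$. Splitting $T$ more finely makes $\|t_k\|$ smaller and pushes the coefficient \emph{up} towards $2\beta$, so ``arrange $2s_k^2\le 1$ by splitting more finely'' goes the wrong way. This is exactly the regime that matters, because the lemma is applied with $\beta=1-\alpha$ close to $1$. (For $\beta\le1/2$ your $\pm$ iteration with fresh directions can in fact be completed.)

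The fallback in Step 5 does not help either. An equal-norm Parseval frame with norms $\beta$ decomposes $\mathbf I_{\mathcal K}$. What remains to be filled is the non-scalar operator $\mathbf I_{\mathcal K}-\sum_k 2s_k^2\,e_k\otimes e_k$, and passing through its square root destroys the equal-norm property. So nothing ``absorbs all leftover defects at once''; the residual problem is the original one.

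The missing idea, which the paper uses, is the projection decomposition theorem \cite[Theorem 2]{dykema2004ellipsoidal}: a positive operator with essential norm $>1$ is a strongly convergent sum of orthogonal projections. The $\mathbf I_{\ell^2(\mathbb N)}$ summand gives $\|\widetilde T\|_{\mathrm{ess}}=1$, hence $\|\beta^{-1}\widetilde T\|_{\mathrm{ess}}=1/\beta>1$. Splitting the projections into rank-one pieces gives unit vectors $w_n$ with $\beta^{-1}\widetilde T=\sum_n w_n\otimes w_n$, and one takes $\varphi_n=\sqrt\beta\,w_n$.

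If you prefer a self-contained construction, replace the $\pm$ pair by a regular simplex. For $\|u\|\le 1$, set $\psi_j=u/\sqrt N+\sigma v_j$, $j=1,\dots,N$, where the $v_j$ are unit vectors in a fresh $(N-1)$-dimensional block $F$ with $\sum_j v_j=0$ and $\sum_j v_j\otimes v_j=\frac{N}{N-1}\mathbf I_F$, and $\sigma^2=\beta-\|u\|^2/N$. Then $\|\psi_j\|^2=\beta$ and $\sum_j\psi_j\otimes\psi_j=u\otimes u+c\,\mathbf I_F$ with $c=(N\beta-\|u\|^2)/(N-1)\in(0,1)$, provided $N>\max\{1/\beta,1/(1-\beta)\}$. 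Start from $T=\sum_k T^{1/2}e_k\otimes T^{1/2}e_k$ and feed each defect $(1-c)\mathbf I_F=\sum_i(\sqrt{1-c}\,f_i)\otimes(\sqrt{1-c}\,f_i)$ into the next stage. Every fresh direction then receives exactly $c+(1-c)=1$, and all cross terms cancel.
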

\begin{proof}
Let $S$ be the frame operator of the Bessel sequence $\{g_i\}_{i \in I}$. Then, by assumption, we have that $0 \leq S \leq \mathbf{I}_{\mathcal{H}}$.

Define the Hilbert space $\widetilde{\mathcal{H}} := \mathcal{H} \oplus \ell^2 (\mathbb{N})$ and identify each vector $g_i \in \mathcal{H}$ with $g_i \oplus 0 \in \widetilde{\mathcal{H}}$. The frame operator of the sequence $(g_i \oplus 0)_{i \in I}$ in $\widetilde{\mathcal{H}}$ is $\widetilde{S} := S \oplus 0$, and hence
\[
 T := \mathbf{I}_{\widetilde{\mathcal{H}}} - \widetilde{S} = (\mathbf{I}_{\mathcal{H}} - S) \oplus \mathbf{I}_{\ell^2 (\mathbb{N})}
\]
is a positive operator in $\mathcal{B}(\widetilde{\mathcal{H}})$. Moreover, its essential norm is given by
\[ \| T \|_{\ess} := \inf \big\{ \| T - K \| : K \in \mathcal{K}(\widetilde{H}) \big\} = 1. \]
Since $\beta < 1$, we get that $\| \beta^{-1} T \|_{ess} > 1$. Therefore, an application of \cite[Theorem 2]{dykema2004ellipsoidal} shows that $\beta^{-1} T$ admits a projection decomposition, i.e., there exist orthogonal projections $P_j \in \mathcal{B}(\widetilde{\mathcal{H}})$ such that $\beta^{-1} T = \sum_{j = 1}^{\infty} P_j$ with convergence in the strong operator topology. By decomposing each $P_j$ into rank-one projections yields, after enumeration, a family $\{w_n \}_{n \in \mathbb{N}} \subseteq \widetilde{\mathcal{H}}$ of unit vectors such that
\[
 \beta^{-1} T = \sum_{n = 1}^{\infty} w_n \otimes w_n
\]
with strong operator convergence.

Lastly, define $\varphi_n := \sqrt{\beta} w_n$. Then $\| \varphi_n \|^2 = \beta$ for $n \in \mathbb{N}$, and $T = \sum_{n = 1}^{\infty} \varphi_n \otimes \varphi_n$. Therefore,
\[
 \sum_{i \in I} g_i \otimes g_i + \sum_{n = 1}^{\infty} \varphi_n \otimes \varphi_n = \widetilde{S} + T = \mathbf{I}_{\widetilde{\mathcal{H}}},
\]
showing that $\{ g_i \}_{i \in I} \cup \{ \varphi_n \}_{n \in \mathbb{N}}$ is a Parseval frame for $\widetilde{\mathcal{H}}$.
\end{proof}

\subsection{Binary selectors}
For stating the selector form of Weaver's conjecture, we will need the concept of a binary selector, which was introduced in \cite[Definition 5.1 and Remark 5.5]{bownik2024selector}.

\begin{definition}\label{binse}
 Let $\{ J_{k}\} _{k \in K}$ be any partition of $I$ with $\# J_k \le 2$ for all $k$. Binary selectors of order $1$ are two disjoint sets $I_0$ and $I_1$ such that $I=I_0 \cup I_1$ and
\[
\#(I_0 \cap J_k), \; \#(I_1 \cap J_k) \le 1
\qquad\text{for all } k\in K.
\]
For $N\ge 2$, we define selectors of order $N$ inductively. Suppose that binary selectors $I_b$,  $b\in \{0,1\}^{N-1}$, of order $N-1$ are already defined. For given $b\in \{0,1\}^{N-1}$, let $\{ J_{k}\} _{k \in K}$ be any partition of $I_b$ with $\# J_k \le 2$ for all $k$. Binary selectors of order $N$ are disjoint sets $I_{b0}$ and $I_{b1}$ satisfying $I_b=I_{b0} \cup I_{b1}$ and
\[
\#(I_{b0} \cap J_k), \;  \#(I_{b1} \cap J_k) \le 1
\qquad\text{for all } k\in K.
\]
\end{definition}

The following result is \cite[Theorem 5.3]{bownik2024selector}; see also \cite{nitzan2016exponential, freeman2019discretization} for other iterative versions of Weaver's conjecture.

\begin{theorem}[\cite{bownik2024selector}] \label{thm:binary}
Let $\delta >0$ and suppose that $\{ g_i \}_{i \in I}$ is a Bessel sequence in $\mathcal{H}$ with Bessel bound $1$ satisfying
\[
 \| g_i \|^2 \leq \delta \quad \text{for all} \quad i \in I. \]
Let $N \in \mathbb{N}$ be such that $2^N < 1/\delta$. For any intermediate choices of partitions with sets of size $\le 2$, there exist binary selectors $I_b$, $b \in \{0,1\}^N$, that form a partition of $I$, and
\begin{align} \label{eq:binary}
 \bigg\| 2^N \sum_{i \in I_b} g_i \otimes g_i - \sum_{i \in I} g_i \otimes g_i \bigg\| \leq c_0 \sqrt{2^N \delta} \quad \text{for all} \quad b \in \{0,1\}^N
\end{align}
for an absolute constant $c_0>1$.
\end{theorem}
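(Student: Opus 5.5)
The statement is \cite[Theorem 5.3]{bownik2024selector}. To prove it, I would induct on $N$, and at each step use a one-step selector version of the Marcus--Spielman--Srivastava theorem, then add up the errors.

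\textbf{One-step selector lemma.} Let $\{g_i\}_{i\in I}$ be Bessel with bound $1$, let $\|g_i\|^2\le\delta$, and let $\{J_k\}$ be a partition of $I$ with $\#J_k\le 2$. The claim is that there is a splitting $I=I_0\cup I_1$, taking at most one element of each $J_k$ into each part, such that
\[
\Big\|2\sum_{i\in I_\epsilon} g_i\otimes g_i - S\Big\|\le C\sqrt{\delta},\qquad \epsilon\in\{0,1\}.
\]
\begin{enumerate}[(i)]
\item First reduce to $\dim\mathcal H<\infty$ and $I$ finite. Compress to finite-dimensional subspaces and finite subsets. Since the choice space $\{0,1\}^I$ is compact in the product topology, a diagonal limit argument then passes the bound to the general case, using weak operator convergence.
\item In the finite case, work in $\mathcal H\oplus\mathcal H$. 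For each block $J_k$, define a random pair of vectors: the elements of $J_k$ are assigned to the two summands in a uniformly random order, so $g_i\mapsto g_i\oplus 0$ or $0\oplus g_i$. A singleton goes to either summand with probability $1/2$.
\item The random matrices $\sum_{i\in J_k}u_iu_i^*$ are independent. Each has rank at most $2$ and is supported on finitely many values. Their expectations sum to $\tfrac12(S\oplus S)\le \tfrac12\mathbf I$, and each $\|u_i\|^2\le\delta$.
\item Apply the MSS interlacing-families bound, in the version for sums of independent random positive semidefinite matrices of bounded rank and bounded trace. This gives an outcome with $\|\sum u_iu_i^*\|\le \tfrac12+C\sqrt{\delta}$, hence $2\sum_{I_\epsilon}g_i\otimes g_i\le S+2C\sqrt\delta\,\mathbf I$ for both $\epsilon$.
\item The lower bound comes for free. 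Since $\sum_{I_0}+\sum_{I_1}=S$, we get $2\sum_{I_0}g_i\otimes g_i = 2S-2\sum_{I_1}g_i\otimes g_i\ge S-2C\sqrt\delta\,\mathbf I$. This yields the two-sided estimate.
\end{enumerate}

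\textbf{Iteration.} Suppose $I_b$ at level $j<N$ satisfies $\|2^j S_b - S\|\le \eta_j$, where $S_b=\sum_{I_b}g_i\otimes g_i$.
\begin{enumerate}[(i)]
\item Rescale: $h_i=\sqrt{2^j/(1+\eta_j)}\,g_i$ for $i\in I_b$. Then $\{h_i\}_{i\in I_b}$ has Bessel bound $1$ and $\|h_i\|^2\le 2^j\delta/(1+\eta_j)\le 2^j\delta$.
\item Apply the lemma to $I_b$ with the prescribed pair partition. This gives $I_{b0}, I_{b1}$ with $\|2^{j+1}S_{b\epsilon}-2^jS_b\|\le (1+\eta_j)C\sqrt{2^j\delta}$.
\item By the triangle inequality, $\eta_{j+1}\le \eta_j+(1+\eta_j)C\sqrt{2^j\delta}$, with $\eta_0=0$.
\end{enumerate}
The increments form a geometric series with ratio $\sqrt2$, dominated by the last term. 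Hence $\eta_N\le c_0\sqrt{2^N\delta}$, provided the $\eta_j$ stay bounded. This holds when $2^N\delta<1$ is small. When $2^N\delta$ is close to $1$, \eqref{eq:binary} is trivial for large $c_0$, because both operators in it have norm $O(1)$.

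\textbf{Main obstacle.} I expect the hardest step to be the one-step lemma: getting MSS to respect the pairing constraint and give a two-sided bound. The $\mathcal H\oplus\mathcal H$ rank-$\le 2$ encoding is what I would try first, relying on the general MSS theorem for finitely supported rank-bounded random matrices. The remaining steps are bookkeeping.
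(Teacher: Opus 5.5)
The paper gives no proof of this statement; it quotes it from \cite[Theorem 5.3]{bownik2024selector}, so your argument has to stand on its own. Most of it does. The compactness reduction, the $\mathcal{H}\oplus\mathcal{H}$ rank-two encoding of the pair constraint, and the induction are fine. In fact $1+\eta_{j+1}\le(1+\eta_j)(1+C\sqrt{2^j\delta})$ gives $\eta_N\le(e^{a}-1)\sqrt{2^N\delta}$ with $a=C/(\sqrt2-1)$ for every $2^N\delta<1$, so no case distinction is needed.

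The gap is the word ``hence'' in step (iv). Write $S_\epsilon=\sum_{i\in I_\epsilon}g_i\otimes g_i$. The MSS-type bound gives you $\|S_0\oplus S_1\|\le\frac12+C\sqrt\delta$, i.e.\ $2S_\epsilon\le(1+2C\sqrt\delta)\mathbf{I}$. That is an estimate against $\mathbf{I}$, and it does not imply $2S_\epsilon\le S+2C\sqrt\delta\,\mathbf{I}$ unless $S=\mathbf{I}$. Without the $S$-relative form, step (v) only yields $2S_0\ge 2S-(1+2C\sqrt\delta)\mathbf{I}$, which is vacuous unless $S\approx\mathbf{I}$.

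This is not cosmetic. The statement is about Bessel sequences, not Parseval frames, with the error measured against $S$. Your induction also needs the $S$-relative form at every level, because the rescaled family at level $j$ has frame operator $\frac{2^j}{1+\eta_j}S_b\approx S/(1+\eta_j)$. Even starting from a Parseval frame, running your induction with $\mathbf{I}$-relative bounds gives only a lower-side recursion $\eta_{j+1}\approx 3\eta_j+C\sqrt{2^j\delta}$. That produces a final error of order $3^N\sqrt\delta$ instead of $\sqrt{2^N\delta}$.

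The repair is to keep the padding that the normalization $\sum_k\mathbb{E}A_k=\mathbf{I}$ in the MSS theorem forces on you anyway. After the reduction to finite dimensions, split $\tfrac12(\mathbf{I}-S)\oplus\tfrac12(\mathbf{I}-S)$ into finitely many deterministic positive rank-one pieces of trace at most $\delta$, and add them to your independent family. The expectations now sum to $\tfrac12\mathbf{I}$, and the conclusion reads
\[
\big\|\big(S_0+\tfrac12(\mathbf{I}-S)\big)\oplus\big(S_1+\tfrac12(\mathbf{I}-S)\big)\big\|\le\tfrac12+C\sqrt\delta,
\]
which is exactly $2S_\epsilon\le S+2C\sqrt\delta\,\mathbf{I}$ for $\epsilon=0,1$.

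Equivalently, complete $\{g_i\}$ to a Parseval frame by adjoining vectors $\varphi_j/\sqrt2$ in duplicated pairs $\{\varphi_j/\sqrt2,\varphi_j/\sqrt2\}$, where $\sum_j\varphi_j\otimes\varphi_j=\mathbf{I}-S$ and $\|\varphi_j\|^2\le2\delta$. Any selector respecting the enlarged pair partition puts one copy of each $\varphi_j/\sqrt2$ on each side. So each half receives exactly $\tfrac12(\mathbf{I}-S)$, and the Parseval case of your lemma gives the $S$-relative estimate. With this change, step (v) and the induction go through as you wrote them.
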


\section{Reproducing kernels on metric measure spaces} \label{sec:rkhs}
This section describes the precise setting of reproducing kernels on metric measure spaces to be used in this paper. It also collects various preliminary results that will be used throughout the paper.

\subsection{Assumptions} We first describe the precise assumptions on the metric measure space and reproducing kernels.

\subsection*{Metric measure space} Let $(X, d, \mu)$ be a metric measure space, consisting of a set $X$ equipped with a metric $d$ and a measure $\mu$. Throughout, we will assume that $\mu(X) = \infty$, and that all balls $B_r (x) = \{ y \in X : d(y,x) < r \}$ satisfy $\mu(B_r(x)) < \infty$ for all $r > 0$ and $x \in X$. In addition, we make the following standing assumptions:
\begin{enumerate}[-]
 \item \emph{Nondegeneracy of balls}: There exists $r>0$ such that
 \begin{align}\label{eq:NDB}
  \inf_{x \in X} \mu(B_r(x)) > 0.
 \end{align}
\item \emph{Weak annular decay property}:
\begin{align} \label{eq:wad}
 \lim_{r\to \infty} \sup_{x \in X} \frac{\mu(B_{r+1} (x) \setminus B_r (x))}{\mu(B_r(x))} = 0.
\end{align}
\item \emph{Doubling at large scale}: There exists $r_0 > 0$ and  $C_d > 0$ such that, for all $r\geq r_0$,
\begin{align} \label{eq:doubling}
 \mu(B_{2r}(x)) \leq C_d \mu(B_r (x)) \quad \text{for all} \quad x \in X.
\end{align}
\end{enumerate}
These standing assumptions are the same as in \cite{bownik2025redundancy}; see also \cite{fuehr2017density, mitkovsi2020density} for similar assumptions.

We will use two consequences of the weak annular decay property \eqref{eq:wad}. First, by \cite[Section 3]{fuehr2017density}, for all $r' > 0$, we have
 \begin{equation}\label{wap}
  \lim_{r \to \infty} \sup_{x \in X} \frac{\mu(B_{r+r'}(x))}{\mu(B_r(x))} = 1
 \quad \text{and} \quad
  \lim_{r \to \infty} \sup_{x \in X} \frac{\mu(B_{r+r'} (x) \setminus B_{r-r'} (x))}{\mu(B_r(x))} = 0.
 \end{equation}
Second, we have that
\begin{equation}\label{gi}
 \lim_{r \to \infty} \inf_{x \in X} \mu (B_r (x)) = \infty,
\end{equation}
cf. \cite[Lemma 3.2]{bownik2025redundancy}.

\subsection*{Reproducing kernel}
Throughout, we will denote by $\mathcal{H} \subseteq L^2 (X, \mu)$ an infinite-dimensional \emph{reproducing kernel Hilbert space (RKHS)}, i.e., a closed subspace such that, for each $x \in X$, the point evaluation $\mathcal{H} \ni f \mapsto f(x) \in \mathbb{C}$ is a well-defined bounded linear functional. Then, for each $x \in X$, there exists $k_x \in \mathcal{H}$ such that $f(x) = \langle f, k_x \rangle $ for all $f \in \mathcal{H}$. The function $k_x$ is said to be the \emph{reproducing kernel at $x$}. The \emph{reproducing kernel for $\mathcal{H}$} is defined as
\[
 k : X \times X \to \mathbb{C}, \; k(x,y) = \langle k_y, k_x \rangle.
\]
Note  $k_x(y) = \overline{k(x,y)} = k(y,x)$, and that each $f \in \mathcal{H}$ satisfies the reproducing formula
\[
 f(x) = \langle f, k_x \rangle = \int_X  f(y) k(x,y) \; d\mu(y), \quad x \in X.
\]
In particular, the system $\{k_x \}_{x \in X}$ is a continuous Parseval frame for $\mathcal{H} \subseteq L^2 (X, \mu)$.

A condition on the reproducing kernel that we will always assume is the following:
\begin{enumerate}[-]
 \item \emph{Diagonal condition}: There exist constants $0 < C_1 \leq C_2 < \infty$ such that
 \begin{align} \label{eq:dc}
  C_1 \leq k(x,x) \leq C_2
 \end{align}
for all $x \in X$.
\end{enumerate}

\subsection{Beurling densities}
The \emph{lower Beurling density} and \emph{upper Beurling density} of a set $\Lambda \subseteq X$ are defined as
\begin{align} \label{eq:lower_density}
 D^-(\Lambda) = \liminf_{r \to \infty} \inf_{x \in X} \frac{\#(\Lambda \cap B_r (x))}{\mu(B_r(x))}
\end{align}
and
\begin{align} \label{eq:upper_density}
 D^+(\Lambda) = \limsup_{r \to \infty} \sup_{x \in X} \frac{\#(\Lambda \cap B_r (x))}{\mu(B_r(x))},
\end{align}
respectively. We also occasionally write $D^{\pm}_{\mu}$ to stress the dependence of the densities on the measure $\mu$. In the particular case of the measure $\mu_0$ given by $d\mu_0 (y) := k(y,y) d\mu(y)$ for $y \in X$, we denote the associated density often simply by $D^{\pm}_{0} = D^{\pm}_{\mu_0}$. Note that the weighted measure $\mu_0$ satisfies the standing assumptions by the diagonal condition \eqref{eq:dc}.

We treat a countable set $\Lambda \subseteq X$ as an index family and allow for multiplicities.

We need several lemmas about Beurling densities. The first one is \cite[Lemma 3.2]{bownik2025redundancy}.

\begin{lemma} [\cite{bownik2025redundancy}] \label{lem:partition}
\label{exist}
 Let $\Lambda \subseteq X$ be such that $0<D^-_{\mu}(\Lambda) \le D^+_{\mu}(\Lambda)<\infty$. 
Then, there exists a partition $\{X_y\}_{y\in Y}$ of $X$ satisfying for some $R>0$ and $C>1$,
 \begin{align}\label{xy}
 B_{R}(y) & \subseteq X_y \subseteq B_{3R}(y) \qquad \text{for all }y\in Y,
\\
\label{reg}
 \frac{1}{C} D^-_{\mu}(\Lambda) & \leq \frac{\#(\Lambda \cap X_y)}{\mu(X_y)} \leq C D^{+}_{\mu}(\Lambda) \qquad \text{for all } y \in Y.
\end{align}
\end{lemma}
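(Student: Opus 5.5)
The plan is to build the partition from a maximal separated set of centres and a greedy, Voronoi-type assignment. The parameter $R$ is chosen so large that the Beurling density quotients are already close to their limiting values at scales $R$ and $3R$.

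\emph{Choice of $R$.} Since $D^-_\mu(\Lambda)>0$ is a $\liminf$ and $D^+_\mu(\Lambda)<\infty$ is a $\limsup$, there is $R_0$ such that for all $r\ge R_0$ and all $x\in X$,
\[
\#(\Lambda\cap B_r(x))\ge \tfrac12 D^-_\mu(\Lambda)\,\mu(B_r(x))
\quad\text{and}\quad
\#(\Lambda\cap B_r(x))\le 2D^+_\mu(\Lambda)\,\mu(B_r(x)).
\]
I fix $R\ge \max(R_0,r_0)$, where $r_0$ is the doubling scale from \eqref{eq:doubling}. I also take $R$ large enough that $\inf_x\mu(B_R(x))>0$, which is possible by \eqref{gi}.

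\emph{Centres.} Using Zorn's lemma, let $Y\subseteq X$ be a maximal $2R$-separated set, so that $d(y,y')\ge 2R$ for distinct $y,y'\in Y$. Then the balls $B_R(y)$, $y\in Y$, are pairwise disjoint. By maximality, the balls $B_{2R}(y)$, $y\in Y$, cover $X$. Moreover $Y$ is countable: each $B_R(y)$ contains at least one point of $\Lambda$, because $\#(\Lambda\cap B_R(y))\ge \frac12 D^-_\mu(\Lambda)\mu(B_R(y))>0$, the balls are disjoint, and $\Lambda$ is countable.

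\emph{The partition.} Enumerate $Y=\{y_1,y_2,\dots\}$ and set $U:=\bigcup_{j}B_R(y_j)$. Define
\[
X_{y_k}:=B_R(y_k)\cup\Big(B_{2R}(y_k)\setminus\Big(U\cup\bigcup_{j<k}B_{2R}(y_j)\Big)\Big).
\]
These sets are measurable, since only countably many balls are involved. They are pairwise disjoint, because the balls $B_R(y_j)$ are disjoint and the leftover parts are assigned greedily. They cover $X$, since the balls $B_{2R}(y_j)$ cover $X$. Finally, $B_R(y)\subseteq X_y\subseteq B_{2R}(y)\subseteq B_{3R}(y)$, which is \eqref{xy}.

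\emph{Density bounds.} By doubling at scales $\ge r_0$ we have $\mu(B_{3R}(y))\le\mu(B_{4R}(y))\le C_d^2\,\mu(B_R(y))$. The lower bound follows from
\[
\#(\Lambda\cap X_y)\ge\#(\Lambda\cap B_R(y))\ge\tfrac12 D^-_\mu(\Lambda)\,\mu(B_R(y))\ge \tfrac{1}{2C_d^2}D^-_\mu(\Lambda)\,\mu(X_y),
\]
where the last step uses $\mu(X_y)\le\mu(B_{3R}(y))$. The upper bound follows from
\[
\#(\Lambda\cap X_y)\le\#(\Lambda\cap B_{3R}(y))\le 2D^+_\mu(\Lambda)\,\mu(B_{3R}(y))\le 2C_d^2D^+_\mu(\Lambda)\,\mu(X_y),
\]
where the last step uses $\mu(X_y)\ge\mu(B_R(y))$. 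Taking $C=2C_d^2$ gives \eqref{reg}.

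The only delicate point is measurability and well-definedness of the greedy assignment. This requires $Y$ to be countable, which is exactly what the positivity of $D^-_\mu(\Lambda)$ and the countability of $\Lambda$ guarantee. The remaining steps are routine.
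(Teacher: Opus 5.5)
Your proof is correct: the maximal $2R$-separated set gives disjoint balls $B_R(y)$ and a cover by the balls $B_{2R}(y)$, the greedy assignment gives a measurable partition with $B_R(y)\subseteq X_y\subseteq B_{2R}(y)$, and doubling at the scales $R,2R\ge r_0$ turns the eventual bounds $\tfrac12 D^-_\mu(\Lambda)\le \#(\Lambda\cap B_r(x))/\mu(B_r(x))\le 2D^+_\mu(\Lambda)$ into \eqref{reg} with $C=2C_d^2$. The paper does not prove this lemma itself but imports it from \cite[Lemma 3.2]{bownik2025redundancy}; your separated-net/Voronoi-cell construction is the standard route to such a partition, and I see nothing missing.
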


We will use the following property of the partitions in Lemma~\ref{lem:partition}.

\begin{lemma}\label{den}
 Let $\Lambda \subseteq X$. If $\{X_y\}_{y\in Y}$ is  a partition of $X$ satisfying \eqref{xy} for some $R>0$, then,
\begin{equation}\label{den0}
\inf_{y\in Y} \frac{\#(\Lambda \cap X_y)}{\mu(X_y)} 
\le D^-_{\mu}(\Lambda) \le D^+_{\mu}(\Lambda) \le
\sup_{y\in Y} \frac{\#(\Lambda \cap X_y)}{\mu(X_y)}.
\end{equation}
\end{lemma}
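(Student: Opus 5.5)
We prove only the upper inequality $D^+_{\mu}(\Lambda) \le \sup_{y} \#(\Lambda\cap X_y)/\mu(X_y)$; the lower one is symmetric, with the roles of inner and outer covering interchanged. The middle inequality holds by definition. Write $s := \sup_{y\in Y} \#(\Lambda\cap X_y)/\mu(X_y)$ and assume $s<\infty$, since otherwise there is nothing to prove. Fix $x\in X$ and a large $r>0$.

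The strategy is to cover $B_r(x)$ by the cells $X_y$ that meet it. If $X_y\cap B_r(x)\neq\emptyset$, then $y\in B_{r+3R}(x)$, because $X_y\subseteq B_{3R}(y)$ by \eqref{xy}. Hence $X_y\subseteq B_{r+6R}(x)$. Let $Y_{x,r}$ be the set of such $y$. Since the $X_y$ partition $X$, we get
\[
\#(\Lambda\cap B_r(x)) \le \sum_{y\in Y_{x,r}} \#(\Lambda\cap X_y) \le s\sum_{y\in Y_{x,r}}\mu(X_y) \le s\,\mu(B_{r+6R}(x)).
\]
The last inequality uses disjointness of the cells, all of which lie in $B_{r+6R}(x)$. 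One should also check that $Y_{x,r}$ is countable, so that the sum makes sense. This follows because the balls $B_R(y)\subseteq X_y$ are disjoint and contained in a ball of finite measure, and \eqref{eq:NDB} together with doubling bounds $\inf_y\mu(B_R(y))$ from below. In fact the bound holds regardless of cardinality, since the left-hand count only involves cells containing points of $\Lambda$.

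Dividing by $\mu(B_r(x))$ gives
\[
\frac{\#(\Lambda\cap B_r(x))}{\mu(B_r(x))}\le s\,\sup_{x'}\frac{\mu(B_{r+6R}(x'))}{\mu(B_{r}(x'))}.
\]
By \eqref{wap} with $r'=6R$, the supremum on the right tends to $1$ as $r\to\infty$. Taking the supremum over $x$ and then the limsup in $r$ yields $D^+_\mu(\Lambda)\le s$.

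For the lower bound, set $t:=\inf_y \#(\Lambda\cap X_y)/\mu(X_y)$. Consider the cells with $y\in B_{r-3R}(x)$; each of these satisfies $X_y\subseteq B_r(x)$. Then
\[
\#(\Lambda\cap B_r(x))\ge t\sum_{y\in B_{r-3R}(x)}\mu(X_y)\ge t\,\mu(B_{r-6R}(x)).
\]
The second inequality holds because every point $z\in B_{r-6R}(x)$ lies in some $X_y$ with $d(y,z)<3R$, hence with $y\in B_{r-3R}(x)$. Then \eqref{wap} gives $\mu(B_{r-6R}(x))/\mu(B_r(x))\to 1$ uniformly in $x$; this is the annulus statement of \eqref{wap} applied at radius $r-6R$. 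Taking the infimum over $x$ and the liminf in $r$ gives $D^-_\mu(\Lambda)\ge t$.

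The only delicate point is the uniform ratio control of balls of radii $r\pm 6R$, which is exactly what \eqref{wap} provides. The rest is elementary covering bookkeeping.
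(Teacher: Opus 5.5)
Your proof is correct and is essentially the paper's argument: the cells meeting $B_r(x)$ lie in $B_{r+6R}(x)$, and the inner cells cover $B_{r-6R}(x)$. You index the inner cells by centers in $B_{r-3R}(x)$ where the paper uses $X_y \subseteq B_r(x)$, which is immaterial, and \eqref{wap} controls $\mu(B_{r\pm 6R}(x))/\mu(B_r(x))$ uniformly in $x$.

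Two small slips do not affect the argument. First, \eqref{eq:NDB} together with doubling at large scale does not bound $\inf_y \mu(B_R(y))$ from below when $R$ is small; countability of the relevant cells follows instead from $\mu(X_y)>0$, disjointness and $\mu(B_r(x))<\infty$. Second, the limit $\mu(B_{r-6R}(x))/\mu(B_r(x)) \to 1$ is the annulus part of \eqref{wap} at radius $r$ (or its first part at radius $r-6R$), not the annulus part at radius $r-6R$.
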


\begin{proof}
We will follow an argument as in \cite[Lemma 3.6]{bownik2025redundancy}.
By \eqref{xy}, we get for $x \in X$ and $r > 6R$,
\begin{align} \label{eq:nested1}
 B_r(x) \subseteq \bigcup_{y \in Y :\  X_y \cap B_{r} (x) \ne \emptyset} X_y \subseteq B_{r+6R} (x)
\end{align}
and
\begin{align} \label{eq:nested2}
 B_{r-6R} (x) \subseteq \bigcup_{y \in Y :\  X_y \subseteq B_{r} (x)} X_y \subseteq B_{r} (x).
\end{align}
Using \eqref{eq:nested1}, we thus get
\begin{align*}
 \frac{\#(\Lambda \cap B_r (x))}{\mu(B_r(x))}
  &\leq 
 \frac{\mu( B_{r+6R}(x))}{\mu(B_r(x))}  \frac{\#\big(\Lambda \cap \bigcup_{y \in Y : X_y \cap B_{r} (x) \ne \emptyset} X_y \big)}{\mu \big(\bigcup_{y \in Y: X_y \cap B_{r} (x) \ne \emptyset} X_y\big)}  \\
& \le  \frac{\mu( B_{r+6R}(x))}{\mu(B_r(x))} \sup_{y\in Y} \frac{\#(\Lambda \cap X_y)}{\mu(X_y)}.
\end{align*}
By \eqref{wap}, we have $\lim_{r \to \infty} \sup_{x \in X} \frac{\mu( B_{r+6R}(x))}{\mu(B_r(x))} = 1$. Hence, the upper bound in \eqref{den0} follows. To show the lower bound we use the fact that $\lim_{r \to \infty} \inf_{x \in X} \frac{\mu( B_{r-6R}(x))}{\mu(B_r(x))} = 1$ and
\begin{align*}
 \frac{\#(\Lambda \cap B_r (x))}{\mu(B_r(x))}
  &\geq 
 \frac{\mu( B_{r-6R}(x))}{\mu(B_r(x))}  \frac{\#\big(\Lambda \cap \bigcup_{y \in Y : X_y \subseteq B_{r} (x) } X_y \big)}{\mu \big(\bigcup_{y \in Y: X_y \subseteq B_{r} (x)} X_y\big)}  \\
& \ge  \frac{\mu( B_{r-6R}(x))}{\mu(B_r(x))} \inf_{y\in Y} \frac{\#(\Lambda \cap X_y)}{\mu(X_y)}.
\end{align*}
This completes the proof.
\end{proof}

We will also use the following regularization lemma, cf. \cite[Lemma 3.5]{bownik2025redundancy}.

\begin{lemma}[\cite{bownik2025redundancy}] \label{regularization}
 Let $\Lambda \subseteq X$ be such that $0<D^-_{\mu}(\Lambda) \le D^+_{\mu}(\Lambda)<\infty$. 
Let $\{X_y\}_{y\in Y}$ be a partition of $X$ satisfying \eqref{xy} and \eqref{reg} for some $R>0$ and $C>1$.
Let $\ve>0$. 
  Then, for sufficiently large $R'>R$, there exists a partition $\{X'_{y}\}_{y \in Y'}$ of $X$ satisfying:
 \begin{align}\label{xyh}
 B_{R'}(y) & \subseteq X'_{y} \subseteq B_{3R'}(y) \qquad \text{for all }y\in Y',
 \\
 \label{reg0}
 D^-_{\mu}(\Lambda) - \ve &\le \frac{\#(\Lambda \cap X'_{y})}{\mu(X'_{y})} \le D^{+}_{\mu}(\Lambda) + \ve \qquad \text{for all }y\in Y',
 \\
 \label{nest}
&  \forall y \in Y\quad  \exists y' \in Y' \quad X_y \subseteq X'_{y'}.
\end{align} 
 \end{lemma}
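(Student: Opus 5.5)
The natural approach is to build the coarse cells $X'_{y'}$ as unions of the given cells $X_y$. Then the nesting property \eqref{nest} holds automatically, and the work lies in getting the geometry \eqref{xyh} and the density bounds \eqref{reg0}.

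\emph{Geometry.} Fix $R'$ large and choose $Y'\subseteq X$ maximal with respect to $d(y_1',y_2')\ge 2R'+12R$ for distinct $y_1',y_2'\in Y'$. Assign each $y\in Y$ to a nearest point of $Y'$, breaking ties by a fixed well-ordering, and let $X'_{y'}$ be the union of the $X_y$ assigned to $y'$. This is a partition of $X$ and satisfies \eqref{nest}.

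\begin{enumerate}[(a)]
\item If $X_y$ meets $B_{R'}(y')$, then $d(y,y')<R'+3R$ by \eqref{xy}. For any other $y''\in Y'$ the separation gives $d(y,y'')>R'+9R$. So $y$ is assigned to $y'$, and hence $B_{R'}(y')\subseteq X'_{y'}$.
\item By maximality of $Y'$, every $y$ lies within $2R'+12R$ of its assigned center. Hence $X'_{y'}\subseteq B_{2R'+15R}(y')\subseteq B_{3R'}(y')$ once $R'\ge 15R$.
\end{enumerate}
This gives \eqref{xyh}.

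\emph{Density.} The difficulty is that the cells $X'_{y'}$ are not balls, so the definition of $D^\pm_\mu$ does not directly control $\#(\Lambda\cap X'_{y'})/\mu(X'_{y'})$. I expect this to be the main obstacle. My plan is a Vitali-type exhaustion by balls whose radii are large but much smaller than $R'$.

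\begin{enumerate}[(i)]
\item Pick $r_\ve$ such that every ball $B_r(x)$ with $r\ge r_\ve$ has ratio $\#(\Lambda\cap B_r(x))/\mu(B_r(x))$ in $[D^-_\mu(\Lambda)-\ve/2,\,D^+_\mu(\Lambda)+\ve/2]$. This is exactly what the $\liminf$ and $\limsup$ in \eqref{eq:lower_density}--\eqref{eq:upper_density} provide.
\item Inside $X'_{y'}$, greedily select disjoint balls of radius $L_1=2^m r_\ve$ that are contained in the cell. Then, in the still uncovered part, select disjoint balls of radius $L_2=L_1/2$, and continue for $m$ rounds.
\item By the large-scale doubling \eqref{eq:doubling}, together with \eqref{wap} to handle the boundary layers of width $O(L_1)$ near $\partial X'_{y'}$, each round should cover a fixed fraction $c>0$ of the remaining measure. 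The constant $c$ depends only on $C_d$. After $m$ rounds the uncovered part then has measure at most $\big((1-c)^m+o(1)\big)\mu(X'_{y'})$, where $o(1)\to0$ as $R'/L_1\to\infty$.
\item The covered part is a disjoint union of balls, each of whose ratios lies in the window from (i). So its count is within $\ve/2$ of the correct density, times its measure.
\item For the uncovered part, enlarge it to the union of those $X_y$ it meets. This costs only a thin layer, controlled by \eqref{wap}. The upper bound in \eqref{reg} then gives count $\le C D^+_\mu(\Lambda)$ times a measure that is a small fraction of $\mu(X'_{y'})$.
\end{enumerate}

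\emph{Choice of parameters.} Choose $m$ with $(1-c)^m C D^+_\mu(\Lambda)<\ve/4$, then $R'\gg 2^m r_\ve$. With these choices the total ratio on $X'_{y'}$ lies in $[D^-_\mu(\Lambda)-\ve,\,D^+_\mu(\Lambda)+\ve]$, which is \eqref{reg0}.

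The delicate point is step (iii): verifying that the greedy selection really covers a definite fraction of the leftover measure. That leftover region is a union of cells rather than a ball. I would handle this with a standard $5r$-covering argument, applied to the points of the leftover region lying at distance at least $L_k$ from its complement, and use \eqref{wap} to show that the near-boundary portion is negligible.
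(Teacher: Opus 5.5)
There is a genuine gap, and it sits in step (iii). Steps (a)--(b), the nesting, and (i), (iv), (v) are fine. The problem is not merely a missing verification. Property \eqref{wap} controls only annuli $B_{r+r'}(x)\setminus B_{r-r'}(x)$ of fixed width around balls. It says nothing about the $L$-neighbourhood of the boundary of a nearest-centre cell $X'_{y'}$, and under the standing assumptions that neighbourhood can fill a fixed proportion of the cell.

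In fact your cells can violate \eqref{reg0}:
\begin{enumerate}[(1)]
\item On $\R^2$ with Lebesgue measure, put $d(x,z)=\|x-z\|_\infty+|h(x)-h(z)|$, where $h\colon\R^2\to\{0,1\}$ is a checkerboard on the squares $X_y$ of side $20$. Writing $B^\infty_r$ for $\ell^\infty$-balls, one has $B^{\infty}_{r-1}(x)\subseteq B_r(x)\subseteq B^{\infty}_r(x)$, so \eqref{eq:NDB}, \eqref{eq:wad}, \eqref{eq:doubling} hold.
\item Let $\Lambda$ have density $\frac12$ on the squares with $h=0$ and $2$ on those with $h=1$. Then \eqref{xy}, \eqref{reg} hold with $R=10$, $C=\frac52$, and $D^{\pm}_\mu(\Lambda)=\frac54$.
\item With $D=2R'+12R$, a generic translate of the lattice $D\mathbb{Z}\times\frac{19}{10}D\mathbb{Z}$ is a maximal $D$-separated set $Y'$. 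Each pair of horizontally adjacent centres is exactly $\ell^\infty$-equidistant from a set of area about $\frac25D^2$, all of whose points are $\ell^\infty$-closer to this pair than to any other centre.
\item On that set your nearest-centre rule is decided by $h(y)$ alone. Redefine $h$ on the discrete set $Y'$ to alternate along its rows. Then a centre with $h=0$ receives exactly the $h=0$ squares of the two such sets it shares with its neighbours.
\item Its cell then has ratio about $1.09$, and the neighbouring cells about $1.41$, for all large $R'$.
\end{enumerate}
So the cells have to be built differently, not just estimated more carefully.

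Independently, the halving $L_{k+1}=L_k/2$ fails even away from $\partial X'_{y'}$. The part of the leftover within $L_{k+1}$ of a ball chosen in the previous round lies in $B_{3L_{k+1}}(c)\setminus B_{2L_{k+1}}(c)$. This annulus has width proportional to the radius, and \eqref{wap} does not make it small. In the plane, after a hexagonal packing by discs of radius $2L$, no disc of radius $L$ fits into the gaps, so the next round gains nothing.

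Both defects can be repaired as follows.
\begin{enumerate}[(1)]
\item Take $L_1\gg L_2\gg\dots\gg L_m\ge\max\{r_\ve,r_0\}$, with each $L_k$ so large, given $L_{k+1}$, that $\sup_x\mu(B_{L_k+L_{k+1}}(x)\setminus B_{L_k}(x))\le\eta\,\mu(B_{L_k}(x))$. This is possible by telescoping \eqref{eq:wad}.
\item Run the packing in all of $X$, not inside the cells. Let $U_k$ be the set still uncovered before round $k$. Prove the gain $\mu(U_{k+1}\cap B_\rho(x))\le(1-C_d^{-1})\mu(U_k\cap B_\rho(x))+\eta'\mu(B_\rho(x))$ on balls $B_\rho(x)$ with $\rho\gg L_1$; there only annuli enter the error terms.
\item Replace each selected ball $B_L(c)$ by the block $\bigcup\{X_y : y\in B_L(c)\}$. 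These blocks are pairwise disjoint and differ from the balls by $3R$-annuli, whose counts are bounded by \eqref{reg}.
\item Only then assign the blocks (by $c$) and the leftover cells (by $y$) to the nearest point of a maximal $(2R'+O(L_1))$-separated set $Y'$.
\end{enumerate}
The leftover inside $X'_{y'}\subseteq B_{3R'}(y')$ then has measure at most $\big((1-C_d^{-1})^m+o(1)\big)C_d^2\,\mu(X'_{y'})$. With that bound, your steps (iv)--(v) complete the argument.
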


Lastly, we prove the following result on the existence of adequate binary selectors.

\begin{proposition}\label{geo}
Let $\Lambda \subseteq X$ be such that $0<D^-_{\mu}(\Lambda) \le D^+_{\mu}(\Lambda)<\infty$. Let $N\in \N$ and $\ve>0$. Then, there exists a choice of consecutive partitions of $\Lambda$ into sets of size $\le 2$, such that every binary selector $I_b$, $b\in \{0,1\}^N$, of order $N$ satisfies
\begin{equation}\label{geo1}
\big| 2^N D^-_\mu(I_b) - D^-_\mu(\Lambda) \big| \le \ve.
\end{equation}
\end{proposition}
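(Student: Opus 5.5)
Starting from Lemma~\ref{lem:partition}, I will pick a partition $\{X_y\}_{y\in Y}$ of $X$ satisfying \eqref{xy} and \eqref{reg}. Then I will refine it with Lemma~\ref{regularization}, applied iteratively $N$ times, to get partitions $\{X^{(j)}_{y}\}_{y\in Y^{(j)}}$, $j=1,\dots,N$. Each is coarser than the previous one in the sense of \eqref{nest}, and each satisfies \eqref{reg0} with an error $\eta>0$ to be fixed in terms of $\ve$ and $N$. I will choose the radii $R_1<R_2<\dots<R_N$ large enough that every cell $X^{(j)}_y$ contains many points of $\Lambda$. This is possible because $D^-_\mu(\Lambda)>0$ and $\inf_x\mu(B_{R}(x))\to\infty$ by \eqref{gi}. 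I will then upgrade the partition property: by refining \eqref{nest}, I will arrange that each $X^{(j)}_y$ is a union of cells of the previous partition. This can be done by replacing $X'_{y'}$ with the union of those $X_y$ assigned to it, which changes the sets only near boundaries and is harmless by \eqref{wap}.

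\emph{The pairing.} At step $j$ (passing from order $j-1$ to order $j$), I will take the current selector $I_b$ and, inside each cell $X^{(j)}_y$, partition $I_b\cap X^{(j)}_y$ arbitrarily into pairs, leaving at most one singleton per cell. Since these pairs never cross cell boundaries, any binary selector $I_{b0}$ or $I_{b1}$ obeys
\[
\Big|\#(I_{b\,i}\cap X^{(j)}_y)-\tfrac12\#(I_b\cap X^{(j)}_y)\Big|\le \tfrac12 .
\]
Because the cells are nested, the cells of level $N$ are unions of cells of every earlier level. Induction on $j$ then gives, for every $b\in\{0,1\}^N$ and every $y\in Y^{(N)}$,
\[
\Big|2^N\#(I_b\cap X^{(N)}_y)-\#(\Lambda\cap X^{(N)}_y)\Big|\le C_N\, m_y ,
\]
where $m_y$ counts the level-$j$ cells contained in $X^{(N)}_y$, for $j\le N$. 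Working instead at the finest level $j=1$ controls only a single halving. The cleanest route is therefore to use the last-level cells, which contain a huge number of earlier cells. The error then becomes roughly $\sum_j 2^{N-j}\#\{\text{level-}j\text{ cells in }X^{(N)}_y\}$.

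\emph{Conclusion, and the main obstacle.} The main obstacle is making this accumulated error small relative to $\mu(X^{(N)}_y)$. Each level-$j$ cell has measure at least $\inf_x\mu(B_{R_j}(x))$, so the number of level-$j$ cells in $X^{(N)}_y$ is at most $\mu(X^{(N)}_y)/\inf_x\mu(B_{R_j}(x))$. The relative error is therefore at most $\sum_j 2^{N}/\inf_x\mu(B_{R_j}(x))$. This can be made smaller than $\eta$ by choosing $R_1$ large enough, again using \eqref{gi}. Combining this with \eqref{reg0} for the level-$N$ partition gives
\[
D^-_\mu(\Lambda)-2\eta\le \frac{2^N\#(I_b\cap X^{(N)}_y)}{\mu(X^{(N)}_y)}\le D^+_\mu(\Lambda)+2\eta .
\]
For the lower bound, Lemma~\ref{den} then yields $2^ND^-_\mu(I_b)\ge D^-_\mu(\Lambda)-2\eta$.

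For the reverse inequality I need $2^ND^-_\mu(I_b)\le D^-_\mu(\Lambda)+\ve$, and this does not follow from $\sup$-control over cells. Instead I will estimate directly on balls. Pick $x,r$ with $\#(\Lambda\cap B_r(x))/\mu(B_r(x))\le D^-_\mu(\Lambda)+\eta$ and $r\gg R_N$. Approximate $B_r(x)$ from inside and outside by unions of level-$N$ cells as in \eqref{eq:nested1}--\eqref{eq:nested2}. The same counting error bound then gives $2^N\#(I_b\cap B_r(x))\le \#(\Lambda\cap B_{r+6R_N}(x))+\eta\,\mu(B_{r+6R_N}(x))$. Since this holds for a sequence of $r\to\infty$, I conclude using \eqref{wap}, together with the finiteness of $D^+_\mu(\Lambda)$ to absorb the annulus. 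Finally I will choose $\eta=\ve/4$.
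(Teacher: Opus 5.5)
Your proof is correct, but it differs from the paper's in two places. For the pairing, the paper uses a single partition $\{X_y\}$ from Lemma~\ref{regularization}, with $\mu(X_y)>2^N/\ve$, for all $N$ rounds. Telescoping then gives $|2^N\#(I_b\cap X_y)-\#(\Lambda\cap X_y)|\le 2^N-1$ on every cell at once. Your $N$ nested partitions, with round $j$ paired inside level-$j$ cells, also work. The accumulated error is really $\sum_j 2^{j-1}m_j$ rather than $\sum_j 2^{N-j}m_j$, but your final bound $2^N\sum_j 1/\inf_x\mu(B_{R_j}(x))$ covers both. They are not needed, though: the finest level ``controls only a single halving'' only because you change partitions between rounds. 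The ``upgrade'' to unions is also automatic, since if both families partition $X$ and \eqref{nest} holds, each coarse cell is exactly the union of the fine cells it contains. For the inequality $2^N D^-_\mu(I_b)\le D^-_\mu(\Lambda)+\ve$, the paper applies Lemma~\ref{regularization} again to each of the $2^N$ sets $I_b$, coarsening iteratively via \eqref{nest}. On the resulting common partition, the infimum of the cell densities of $I_b$ is within $2^{-N}\ve$ of $D^-_\mu(I_b)$, and the paper then compares infima using \eqref{il}. Your direct comparison on near-extremal balls for $\Lambda$ avoids this second regularization, and also the implicit check that each $I_b$ meets the hypotheses of Lemma~\ref{regularization}. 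Its only cost is the annulus step. To make that precise, bound $\#(\Lambda\cap(B_{r+6R_N}(x)\setminus B_r(x)))$ through the level-$N$ cells meeting the annulus: these lie in $B_{r+12R_N}(x)\setminus B_{r-6R_N}(x)$ and have density at most $D^+_\mu(\Lambda)+\eta$ by \eqref{reg0}, and then \eqref{wap} finishes. Alternatively, avoid the annulus entirely: take a near-extremal ball $B_s(x)$ and cover $B_{s-6R_N}(x)$ by cells contained in $B_s(x)$, so that you only need $\mu(B_s(x))/\mu(B_{s-6R_N}(x))\to 1$ uniformly in $x$.
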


\begin{proof} 
By Lemma \ref{regularization}, there exists a partition $\{X_y\}_{y\in Y}$ and $R>0$  such that \eqref{xyh} and \eqref{reg0} hold. By taking $R>0$ sufficiently large, \eqref{gi} guarantees that
\begin{equation}\label{big}
\mu( X_{y}) > 2^N/\ve.
\end{equation}
for $y \in Y$.

Let $\{J_k\}_{k\in K}$ be a partition of $\Lambda$ into sets of size $\le 2$ such that:
\begin{itemize}
\item for all $k\in K$, there exists $y\in Y$ such that $J_k \subset X_y$,
\item if $y\in Y$, then for all, but at most one, $k\in K$, such that $J_k \subset X_y$, we have $\#(J_k \cap X_y) =2$.
\end{itemize}
The last property is equivalent with the following. If $\#(\Lambda \cap X_y)$ is even, then we require that all $J_k \subset X_y$ have size 2. If $\#(\Lambda \cap X_y)$ is odd, then there is exactly one $k\in K$ such that $J_k \subset X_y$ and $J_k$ is a singleton. Let $I_0$ and $I_1$ be any binary selectors of order $1$. Then, for any $y\in Y$ we have
\[
|2 \#(I_i \cap X_y) - \#(\Lambda \cap X_y)| \le 1 \qquad \text{for }i=0,1.
\]

We construct consecutive partitions of $\Lambda$ into sets of size $\le 2$ as follows. Suppose that for some $j\ge 1$, binary selectors $I_b$,  $b\in \{0,1\}^{j}$, of order $j$ are already defined. Given $b\in \{0,1\}^j$, we let $\{J_k\}_{k\in K}$ be a partition of $I_b$ into sets of size $\le 2$ such that:
\begin{itemize}
\item for all $k\in K$, there exists $y\in Y$ such that $J_k \subset X_y \cap I_b$,
\item if $y\in Y$, then for all, but at most one, $k\in K$, such that $J_k \subset X_y\cap I_b$, we have $\#(J_k \cap X_y) =2$.
\end{itemize}
That is, all $k\in K$ such that $J_k \subset X_y\cap I_b$, satisfy $\#(J_k \cap X_y) =2$, whenever $\#(X_y\cap I_b)$ is even. If $\#(X_y\cap I_b)$ is odd, there is exactly one exception when $J_k$ is a singleton. Let $I_{b0}$ and $I_{b1}$ be any binary selectors of order $1$. 
Then, for any $y\in Y$ we have
\begin{equation}\label{geo5}
|2 \#(I_{bi} \cap X_y) - \#(I_b \cap X_y)| \le 1 \qquad \text{for }i=0,1.
\end{equation}

Consequently, we obtain binary selectors $I_b$, $b\in \{0,1\}^N$, of order $N\in \N$, satisfying
\begin{equation}\label{geo10}
|2^N \#(I_b \cap X_y) - \#(\Lambda \cap X_y) | \le 2^N-1 \qquad\text{for all }y\in Y.
\end{equation}
Indeed, fix $b\in \{0,1\}^N$. For any $0 \le j \le N$, let $b_j$ be the first $j$ components of $b$. By telescoping and \eqref{geo5} we have
\[
\begin{aligned}
|2^N \#(I_b \cap X_y) - \#(\Lambda \cap X_y) | & \le \sum_{j=0}^{N-1} |2^{j+1} \#(I_{b_{j+1}} \cap X_y) - 2^j \#(I_{b_j} \cap X_y) |
\\
& 
=  \sum_{j=0}^{N-1} 2^j |2 \#(I_{b_{j+1}} \cap X_y) - \#(I_{b_j} \cap X_y) |  \\
&\le \sum_{j=0}^{N-1} 2^j =2^N-1.
\end{aligned}
\]
Hence, by \eqref{big}, we have
\begin{equation}\label{il}
\frac{|2^N \#(I_b \cap X_y) - \#(\Lambda \cap X_y) |}{\mu(X_y)} < \ve
\end{equation}
for any $b\in \{0,1\}^N$.

By Lemma \ref{regularization}, there exists a partition $\{X'_{y}\}_{y \in Y'}$ of $X$ satisfying \eqref{xyh}, \eqref{reg0}, \eqref{nest}, and such that for all $b\in \{0,1\}^N$ we have
\begin{equation}\label{reg9}
 D^-_{\mu}(I_b) - 2^{-N} \ve \le \frac{\#(I_b \cap X'_{y})}{\mu(X'_{y})} \le D^{+}_{\mu}(I_b) + 2^{-N} \ve \qquad \text{for all }y\in Y'.
\end{equation}
Indeed, this follows by the iterative application of Lemma \ref{regularization} for each set $I_b$ to increasingly coarser partitions of $X$, due to the nested property \eqref{nest}. At each application of Lemma \ref{regularization} we take the previously constructed partition $\{X'_{y}\}_{y \in Y'}$, which satisfies \eqref{reg9} for some finite subcollection of $ b\in \{0,1\}^N$, to produce a coarser partition $\{X''_{y}\}_{y \in Y''}$, which satisfies \eqref{reg9} for an additional element $b\in \{0,1\}^N$. After a finite number of steps we exhaust all elements and obtain a final partition, also denoted by $\{X'_{y}\}_{y \in Y'}$, that satisfies  \eqref{reg9} for all $b\in \{0,1\}^N$.

Combining Lemma \ref{den} and \eqref{reg0} yields
\[
D^-_{\mu}(\Lambda) - \ve \le \inf_{y\in Y'} \frac{\#(\Lambda \cap X'_{y})}{\mu(X'_{y})} \le D^-_{\mu}(\Lambda) .
\]
Likewise, combining Lemma \ref{den} and \eqref{reg9} yields
\[
2^N D^-_{\mu}(I_b) - \ve \le \inf_{y\in Y'} \frac{2^N \#(I_b\cap X'_{y})}{\mu(X'_{y})} \le 2^N D^-_{\mu}(I_b) .
\]
On the other hand, the nested property and \eqref{il} also imply that
\[
\frac{|2^N \#(I_b \cap X'_y) - \#(\Lambda \cap X'_y) |}{\mu(X'_y)} < \ve \qquad\text{for all }y\in Y'.
\]
Therefore, we deduce that
\[
\big| 2^N D^-_\mu(I_b) - D^-_\mu(\Lambda) \big| \le 2\ve,
\]
which settles the claim.
\end{proof}

\subsection{Localization conditions}
In addition to the assumptions on the metric measure space and the reproducing kernel, we will also consider various conditions on systems of vectors in the reproducing kernel Hilbert space. Explicitly, given a system of vectors $\{g_x\}_{x \in X}$ in the reproducing kernel Hilbert space $\mathcal{H} \subseteq L^2 (X)$, we consider the conditions:
\begin{enumerate}[-]
\item \emph{Weak localization property}: For every $\varepsilon > 0$, there exists $r = r(\varepsilon) > 0$ such that
\begin{align} \label{eq:wl}
 \sup_{x \in X} \int_{X \setminus B_r (x)} |g_x(y)|^2 \; d\mu(y) < \varepsilon^2.
\end{align}
\item \emph{Homogeneous approximation property}: If $\Lambda \subseteq X$ is a countable set such that $\{ g_{\lambda} \}_{\lambda \in \Lambda}$ is a Bessel sequence in $\mathcal{H}$, then for every $\varepsilon > 0$, there exists $r = r(\varepsilon) > 0$ such that
\begin{align} \label{eq:hap}
 \sup_{x \in X} \sum_{\lambda \in \Lambda \setminus B_r (x)} |\gf (x)|^2 < \varepsilon^2.
\end{align}
\end{enumerate}

In contrast to the standing assumptions on the metric measure space and reproducing kernel, we will not always assume the conditions \eqref{eq:wl} and \eqref{eq:hap} to hold. In fact, the minimal assumption required for the present paper is that the vectors $\{g_x\}_{x \in X}$ satisfy the so-called \emph{frame redundancy/density property} (cf. \cite[Definition 4.1]{bownik2025redundancy}), which we recall next.

If $\Lambda \subseteq X$ is countable and $\{\gf\}_{\lambda \in \Lambda}$ is a frame for $\mathcal{H}$ with canonical dual frame $\{\df\}_{\lambda \in \Lambda}$, then we define the associated \emph{lower frame measure} and \emph{upper frame measure} by
\begin{align*}
 M^-(\{\gf \}_{\lambda \in \Lambda})  := \liminf_{r \to \infty} \inf_{x \in X} \frac{1}{\#(\Lambda \cap B_r (x))} \sum_{\lambda \in \Lambda \cap B_r (x)} \langle \gf, \df \rangle
\end{align*}
and
\begin{align*}
 M^+(\{\gf \}_{\lambda \in \Lambda})  := \limsup_{r \to \infty} \sup_{x \in X} \frac{1}{\#(\Lambda \cap B_r (x))} \sum_{\lambda \in \Lambda \cap B_r (x)} \langle \gf, \df \rangle,
\end{align*}
 respectively. The following definition corresponds to \cite[Definition 4.1]{bownik2025redundancy}.

\begin{definition}
A system of vectors  $\{g_x \}_{x \in X}$ is said to satisfy the \emph{frame measure/density property} if for each countable set $\Lambda \subseteq X$ such that $\{\gf \}_{\lambda \in \Lambda}$ is a frame for $\mathcal{H}$ the following formulae hold:
  \begin{align} \label{eq:frd}
  M^-(\{\gf \}_{\lambda \in \Lambda}) = \frac{1}{D_0^+ (\Lambda)} \quad \text{and} \quad M^+(\{\gf \}_{\lambda \in \Lambda}) = \frac{1}{D_0^- (\Lambda)}.
 \end{align}
\end{definition}

The following theorem is \cite[Theorem 4.3]{bownik2025redundancy}; see also \cite{fuehr2017density, mitkovsi2020density, olevskii2012revisiting, balan2006density2} for closely related results in various settings.

\begin{theorem} \label{thm:framemeasure}
 Suppose that $\{g_x \}_{x \in X}$ is a system of vectors satisfying $\inf_{x \in X} \| g_x \|^2 > 0$, the weak localization condition \eqref{eq:wl} and the homogeneous approximation property \eqref{eq:hap}. Then $\{g_x \}_{x \in X}$ satisfies the frame measure/density property \eqref{eq:frd}.
\end{theorem}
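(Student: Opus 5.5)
The plan is the Balan--Casazza--Heil--Landau trace comparison, in the form used by F\"uhr--Gr\"ochenig--Haimi--Klotz--Romero. Fix a countable $\Lambda \subseteq X$ such that $\{\gf\}_{\lambda\in\Lambda}$ is a frame with canonical dual $\{\df\}_{\lambda\in\Lambda}$. Two quantities will be compared. One is the discrete trace $\sum_{\lambda\in\Lambda\cap B_r(x)}\langle \gf,\df\rangle$. The other is the trace of the Toeplitz (localization) operator
\[
L_{r,x} := P\,\mathds{1}_{B_r(x)}P = \int_{B_r(x)} k_y\otimes k_y \, d\mu(y),
\]
whose trace is $\int_{B_r(x)}k(y,y)\,d\mu(y)=\mu_0(B_r(x))$. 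The goal is the uniform estimate
\[
\sup_{x\in X}\frac{\bigl|\sum_{\lambda\in\Lambda\cap B_r(x)}\langle\gf,\df\rangle-\mu_0(B_r(x))\bigr|}{\mu_0(B_r(x))}\longrightarrow 0 \qquad (r \to \infty).
\]
Dividing by $\#(\Lambda\cap B_r(x))$ and taking $\liminf\inf$, respectively $\limsup\sup$, then gives \eqref{eq:frd} directly. This is because $\inf_x(a_x/b_x)=1/\sup_x(b_x/a_x)$, and the error term is $o(\#(\Lambda\cap B_r(x)))$ once we know $D_0^-(\Lambda)>0$.

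The preliminary steps are density bounds. First I show $D_0^+(\Lambda)<\infty$. By \eqref{eq:hap} the points of $\Lambda$ cannot cluster: $\inf\|g_x\|^2>0$ together with \eqref{eq:wl} gives each $\gf$ a fixed amount of mass in $B_s(\lambda)$. Combined with the Bessel bound $B$, integrating $\sum_\lambda|\langle k_y,\gf\rangle|^2\le B\,k(y,y)$ over a ball gives $\#(\Lambda\cap B_r(x))\lesssim\mu(B_{r+s}(x))$. Second, I show $D_0^-(\Lambda)>0$. This follows from the lower frame bound applied to $k_y$ with $y$ ranging over a ball, together with \eqref{eq:hap}: the far-away $\lambda$ carry only a small portion of $\sum_\lambda|\gf(y)|^2\ge A\,k(y,y)$. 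Finally, since $\langle\gf,\df\rangle\le\|\gf\|\|\df\|$, these inner products are uniformly bounded.

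For the main estimate, write $\operatorname{tr}L_{r,x}=\sum_{\lambda\in\Lambda}\langle L_{r,x}\gf,\df\rangle$ using the reconstruction $f=\sum\langle f,\df\rangle\gf$; this is legitimate since $L_{r,x}$ is trace class by \eqref{eq:dc}. Fix $\varepsilon$ and a corresponding $s=r(\varepsilon)$ from \eqref{eq:wl}. Then split $\Lambda$ into three parts: $\Lambda\cap B_{r-s}(x)$, $\Lambda\cap(B_{r+s}(x)\setminus B_{r-s}(x))$, and $\Lambda\setminus B_{r+s}(x)$.
\begin{enumerate}[(i)]
\item For $\lambda$ deep inside, $|\langle L_{r,x}\gf,\df\rangle-\langle\gf,\df\rangle|\le\|\mathds{1}_{X\setminus B_r(x)}\gf\|\,\|\df\|\lesssim\varepsilon$ by \eqref{eq:wl}. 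Summing, this contributes an error $\lesssim\varepsilon\,\#(\Lambda\cap B_r(x))$.
\item The annulus contributes at most a constant times $\#(\Lambda\cap(B_{r+s}\setminus B_{r-s}))$. By the upper density bound and \eqref{wap}, this is $o(\mu(B_r(x)))$ uniformly in $x$.
\item For the far part, write $\sum_{\lambda\notin B_{r+s}}\int_{B_r(x)}\gf(y)\overline{\df(y)}\,d\mu(y)$ and apply Cauchy--Schwarz pointwise in $y$. For $y\in B_r(x)$, \eqref{eq:hap} gives $\sum_{\lambda\notin B_s(y)}|\gf(y)|^2<\varepsilon^2$, and $\sum_\lambda|\df(y)|^2\le A^{-1}k(y,y)$. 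So this part is $\lesssim\varepsilon\,\mu_0(B_r(x))$.
\end{enumerate}
The same splitting applied to the discrete trace, where $\lambda$ in the annulus again contribute $o(\mu(B_r))$, yields the desired estimate after letting $r\to\infty$ and then $\varepsilon\to0$.

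The main obstacle is the far term (iii). It requires a decay estimate for the \emph{dual} frame, which is not directly localized. The trick above avoids this by putting all the localization on the $\gf$ via \eqref{eq:hap} and using only the global Bessel bound for $\{\df\}$. I will check carefully that the interchange of sum and integral is justified and that all constants are uniform in $x$. Uniformity is what upgrades the pointwise comparison to the $\inf_x$/$\sup_x$ statements in \eqref{eq:frd}.
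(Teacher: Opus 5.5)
Your argument is correct. The paper does not reprove this statement; it imports it from \cite[Theorem 4.3]{bownik2025redundancy}. Your comparison of $\sum_{\lambda\in\Lambda\cap B_r(x)}\langle g_\lambda,h_\lambda\rangle$ with $\operatorname{tr}(P\mathds{1}_{B_r(x)}P)=\mu_0(B_r(x))$ is the standard trace-comparison argument in the Balan--Casazza--Heil--Landau and F\"uhr et al.\ tradition on which such results rest. There \eqref{eq:wl} handles the near part, and \eqref{eq:hap} together with the Bessel bound $A^{-1}$ of the dual frame handles the far part.

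Two small points should be made explicit. Step (ii) needs the localized count $\#(\Lambda\cap E)\lesssim\mu_0(\{y: d(y,E)<s\})$ for arbitrary $E$, not merely $D_0^+(\Lambda)<\infty$; your mass argument based on \eqref{eq:wl} gives this verbatim. Also, $s$ must be taken at least as large as both the \eqref{eq:wl} radius and the \eqref{eq:hap} radius.
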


\section{Riesz sequences near critical density} \label{sec:main}
Throughout this section, we let $\mathcal{H} \subseteq L^2 (X)$ be a reproducing kernel Hilbert space satisfying the assumption of Section~\ref{sec:rkhs}.

We start with the following lemma.

\begin{lemma} \label{lem:upper_lambda_a}
Let $\{g_x \}_{x \in X}$ be a system of vectors in $\mathcal{H}$ satisfying the frame measure/density property \eqref{eq:frd}.

Suppose that $\Lambda \subseteq X$ is a countable set such that $\{g_{\lambda} \}_{\lambda \in \Lambda}$ is a frame for $\mathcal{H}$ with canonical dual frame $\{h_{\lambda} \}_{\lambda \in \Lambda}$ and that $0<D_0^-(\Lambda) \leq D^+_0(\Lambda) < \infty$.  For $0< \alpha < 1$, define  $\Lambda_{\alpha} := \{ \lambda \in \Lambda : \langle g_{\lambda}, h_{\lambda} \rangle \leq \alpha \big\}$. Then
\begin{align} \label{eq:upper_lambda_a}
 D^+_0 (\Lambda_{\alpha}) \leq \frac{D^+_0 (\Lambda) - 1}{1-\alpha}
\end{align}
\end{lemma}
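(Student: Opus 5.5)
We compare the frame measure average on a large ball with the proportion of points of $\Lambda_\alpha$ in that ball. Write $a_\lambda := \langle g_\lambda, h_\lambda \rangle$. Since the canonical dual frame satisfies $0 \le a_\lambda \le 1$ (indeed $a_\lambda = \langle S^{-1/2} g_\lambda, S^{-1/2} g_\lambda\rangle = \|S^{-1/2}g_\lambda\|^2$, the squared norm of a vector in a Parseval frame), we have $a_\lambda \le \alpha$ on $\Lambda_\alpha$ and $a_\lambda \le 1$ on $\Lambda \setminus \Lambda_\alpha$. Hence, for every $x\in X$ and $r>0$,
\[
\sum_{\lambda \in \Lambda \cap B_r(x)} a_\lambda \le \alpha\, \#(\Lambda_\alpha \cap B_r(x)) + \#(\Lambda \cap B_r(x)) - \#(\Lambda_\alpha\cap B_r(x)),
\]
which can be rearranged to
\[
(1-\alpha)\, \#(\Lambda_\alpha \cap B_r(x)) \le \#(\Lambda \cap B_r(x)) - \sum_{\lambda \in \Lambda \cap B_r(x)} a_\lambda .
\]

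Next we divide by $\mu_0(B_r(x))$ and write the subtracted term as
\[
\frac{\#(\Lambda\cap B_r(x))}{\mu_0(B_r(x))}\cdot \frac{1}{\#(\Lambda\cap B_r(x))}\sum_{\lambda\in\Lambda\cap B_r(x)} a_\lambda .
\]
Fix $\varepsilon>0$. By the frame measure/density property \eqref{eq:frd}, for $r$ large the averaged quantity is at least $M^-(\{g_\lambda\}) - \varepsilon = 1/D_0^+(\Lambda) - \varepsilon$, uniformly in $x$. Combining this with the rearranged inequality gives
\[
(1-\alpha)\frac{\#(\Lambda_\alpha \cap B_r(x))}{\mu_0(B_r(x))} \le \frac{\#(\Lambda\cap B_r(x))}{\mu_0(B_r(x))}\Big(1 - \frac{1}{D_0^+(\Lambda)} + \varepsilon\Big).
\]
Here we use that $1 - 1/D_0^+(\Lambda) + \varepsilon \ge 0$, which holds since $D_0^+(\Lambda)\ge D_0^-(\Lambda)\ge 1$ by Theorem~\ref{thm:necessary_intro}; alternatively, take $\varepsilon$ large enough relative to any negative part. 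Also, $\Lambda\cap B_r(x)$ is nonempty for large $r$ because $D_0^->0$.

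Taking $\sup_x$ and then $\limsup_{r\to\infty}$ bounds the right-hand side by $D_0^+(\Lambda)\big(1 - 1/D_0^+(\Lambda) + \varepsilon\big) = D_0^+(\Lambda) - 1 + \varepsilon D_0^+(\Lambda)$. Letting $\varepsilon\to 0$ yields \eqref{eq:upper_lambda_a}.

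The only delicate point is justifying the sign step. Writing the bound as a product requires a uniform lower bound on the average of $a_\lambda$, not merely on its $\liminf$, so that it can be multiplied by the density ratio. This comes directly from the definition of $M^-$ as a $\liminf$ of infima over $x$, together with the positivity of the density ratio.
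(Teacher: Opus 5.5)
Your proof is correct and takes essentially the same approach as the paper: both use the pointwise bounds $\langle g_\lambda,h_\lambda\rangle\le\alpha$ on $\Lambda_\alpha$ and $\le 1$ elsewhere, combined with $M^-=1/D_0^+(\Lambda)$. The paper passes to a sequence of balls realizing $D_0^+(\Lambda_\alpha)$ and an auxiliary $\widetilde M^-\ge M^-$, whereas you use the uniform-in-$x$ lower bound for all large $r$ directly, which is slightly cleaner. One small fix: Theorem~\ref{thm:necessary_intro} concerns reproducing kernels under (DC), (WL), (HAP), so it does not apply to a general system $\{g_x\}$ satisfying \eqref{eq:frd}. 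You do not need it, because $0\le\langle g_\lambda,h_\lambda\rangle\le 1$ already gives $M^-\le 1$, i.e.\ $1-1/D_0^+(\Lambda)\ge 0$.
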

\begin{proof}
Under the assumptions, we have that
$
 M^- (\{g_{\lambda} \}_{\lambda \in \Lambda}) = \frac{1}{D^+_0 (\Lambda)}.
$

Choose $r_n \in (0, \infty)$ such that $r_n \to \infty$ as $n \to \infty$ and \(x_n\in X\) such that
\[
D_0^+(\Lambda_\alpha)
=
\lim_{n\to\infty}
\frac{\#\bigl(\Lambda_\alpha\cap B_{r_n}(x_n)\bigr)}
{\displaystyle\int_{B_{r_n}(x_n)}k(y,y)\,d\mu(y)},
\]
and define
\[
\widetilde M^-(\{g_\lambda\}_{\lambda \in \Lambda})
:=
\liminf_{n\to\infty}
\frac{1}{\#\bigl(\Lambda\cap B_{r_n}(x_n)\bigr)}
\sum_{\lambda\in\Lambda\cap B_{r_n}(x_n)}
\langle g_\lambda,h_\lambda\rangle.
\]
Note that
$
0\leq  M^-(\{g_\lambda\}_{\lambda \in \Lambda})
\leq\widetilde M^-(\{g_\lambda\}_{\lambda \in \Lambda})
\leq1.
$

Fix \(\varepsilon>0\). Choose subsequences \((r_{n_k})_{k \in \mathbb{N}} \) and
\((x_{n_k})_{k \in \mathbb{N}} \), and \(N\in\mathbb N\), such that
\[
\left|
\frac{1}{\#\bigl(\Lambda\cap B_{r_{n_k}}(x_{n_k})\bigr)}
\sum_{\lambda\in\Lambda\cap B_{r_{n_k}}(x_{n_k})}
\langle g_\lambda,h_\lambda\rangle
- \widetilde M^-(\{g_\lambda\}_{\lambda \in \Lambda})
\right|
<\varepsilon,
\qquad k\geq N.
\]
Then, for \(k\geq N\), setting
$
B_k:=B_{r_{n_k}}(x_{n_k}),
$
we have
\begin{align*}
\widetilde M^-(\{g_\lambda\}_{\lambda \in \Lambda}) - \varepsilon
&\leq
\frac{1}{\#\bigl(\Lambda\cap B_k \bigr)}
\sum_{\lambda\in\Lambda\cap B_k}
\langle g_\lambda,h_\lambda\rangle
\\
&=
\frac{1}{\#(\Lambda\cap B_k)}
\left(
\sum_{\lambda\in\Lambda_\alpha\cap B_k}
\langle g_\lambda,h_\lambda\rangle
+
\sum_{\lambda\in(\Lambda\setminus\Lambda_\alpha)\cap B_k}
\langle g_\lambda,h_\lambda\rangle
\right)
\\
&\leq
\frac{
\#\bigl((\Lambda\setminus\Lambda_\alpha)\cap B_k\bigr)
+\alpha\#(\Lambda_\alpha\cap B_k)}
{\#(\Lambda\cap B_k)}
\\
&=
\frac{
\#(\Lambda\cap B_k)
-(1-\alpha)\#(\Lambda_\alpha\cap B_k)}
{\#(\Lambda\cap B_k)}.
\end{align*}
This easily yields 
\[
\frac{\#(\Lambda_\alpha\cap B_k)}{\int_{B_k} k(y,y) \; d\mu(y)}
\leq
\frac{1-\widetilde M^-(\{g_\lambda\}_{\lambda \in \Lambda})+\varepsilon}{1-\alpha}
\cdot \frac{\#(\Lambda\cap B_k)}{\int_{B_k} k(y,y) \; d\mu(y)}.
\]
Hence, letting \(k\to\infty\), we obtain
\begin{align*}
D_0^+(\Lambda_\alpha)
&\leq
\frac{1- \widetilde M^-(\{g_\lambda\}_{\lambda \in \Lambda}) +\varepsilon}{1-\alpha}
\limsup_{k\to\infty}
\frac{\#(\Lambda\cap B_k)}{\int_{B_k} k(y,y) \; d\mu(y)}
\\
&\leq
\frac{1- \widetilde M^-(\{g_\lambda\}_{\lambda \in \Lambda}) +\varepsilon}{1-\alpha}
D_0^+(\Lambda)
\\
&\leq
\frac{1- M^-(\{g_\lambda\}_{\lambda \in \Lambda}) +\varepsilon}{1-\alpha}
D_0^+(\Lambda).
\end{align*}
Since $\varepsilon > 0$ was chosen arbitrary, this yields the result.
\end{proof}

The following result is the main theorem of this paper.

\begin{theorem} \label{thm:main}
Suppose $\{g_x \}_{x \in X}$ is a frame for $\mathcal{H} \subseteq L^2 (X)$ satisfying the frame measure/density property \eqref{eq:frd} and such that
\[
 0 < c \leq \| g _x \|^2 \leq C < \infty \quad \text{for all} \quad x \in X.
\]
Then, for every $\varepsilon > 0$, there exists a countable set $\Lambda \subseteq X$ satisfying
 $
  D^-_0 (\Lambda) \geq 1 - \varepsilon
 $
 and such that $\{g_{\lambda} \}_{\lambda \in \Lambda}$ is a Riesz sequence in $\mathcal{H}$.

 In addition, if $\{g_x \}_{x \in X}$ is a Parseval frame for $\mathcal{H}$ satisfying $C := \| g_x \|^2$ for all $x \in X$, then the Riesz bounds of the Riesz sequence $\{g_{\lambda} \}_{\lambda \in \Lambda}$ can be chosen to be of the form $ c_1 (\ve)C$ and $c_2 C$ for constants $c_1(\ve), c_2>0$ with $c_1(\ve)$ depending only on $\ve$.
\end{theorem}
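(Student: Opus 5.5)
Following the strategy sketched in the introduction, the proof starts from a frame near the critical density and thins it out in two stages.

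\textbf{Step 1: a frame near critical density.} Fix $\varepsilon>0$ and a small parameter $\eta>0$ to be chosen later. By \cite[Theorem 5.4]{bownik2025redundancy}, the general version of Theorem~\ref{thm:frame}, there is a countable $\Lambda\subseteq X$ such that $\{g_\lambda\}_{\lambda\in\Lambda}$ is a frame for $\mathcal{H}$ with $D_0^+(\Lambda)\le 1+\eta$. By Theorem~\ref{thm:necessary_intro}, or directly from \eqref{eq:frd} and $M^+\le 1$, we also have $D_0^-(\Lambda)\ge 1$.

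Let $\{h_\lambda\}$ be the canonical dual frame. Replace the frame by its canonical Parseval frame $\{S^{-1/2}g_\lambda\}$; this is legitimate because Riesz sequences are preserved under the invertible map $S^{1/2}$, up to constants. Its diagonal Gram entries are $\langle g_\lambda,h_\lambda\rangle$. Fix $\alpha\in(0,1)$ close to $1$. By Lemma~\ref{lem:upper_lambda_a}, $D_0^+(\Lambda_\alpha)\le \eta/(1-\alpha)$, which is small if $\eta\ll 1-\alpha$. Hence $\Lambda':=\Lambda\setminus\Lambda_\alpha$ satisfies $D_0^-(\Lambda')\ge 1-\eta/(1-\alpha)$; to get this, subtract counts in balls. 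For all $\lambda\in\Lambda'$ we have $\|S^{-1/2}g_\lambda\|^2>\alpha$.

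\textbf{Step 2: Naimark complement and selectors.} The subfamily $\{S^{-1/2}g_\lambda\}_{\lambda\in\Lambda'}$ is Bessel with bound $1$. Apply Lemma~\ref{lem:bessel_completion} with $\beta=\alpha$ to complete it to a Parseval frame of $\widetilde{\mathcal H}$ by adding vectors $\varphi_n$ with $\|\varphi_n\|^2=\alpha$. By Naimark (Lemma~\ref{lem:naimark}), embed $\widetilde{\mathcal H}\subseteq\ell^2(\Lambda'\sqcup\mathbb N)$ with projection $P$. The complement vectors $(\mathbf I-P)e_i$ then form a Parseval frame with $\|(\mathbf I-P)e_\lambda\|^2=1-\|S^{-1/2}g_\lambda\|^2<1-\alpha=:\delta$. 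Restricted to $\Lambda'$, they are Bessel with bound $1$ and norms at most $\delta$.

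Choose $N$ with $2^N\approx \delta^{-1/2}$, so that $c_0\sqrt{2^N\delta}\le c_0\delta^{1/4}<1$ once $\alpha$ is near $1$. Apply Theorem~\ref{thm:binary} with the partitions supplied by Proposition~\ref{geo} (with respect to $\mu_0$ and the set $\Lambda'$). Each selector $I_b$ then satisfies
\[
\Big\|\sum_{i\in I_b}(\mathbf I-P)e_i\otimes(\mathbf I-P)e_i\Big\|\le 2^{-N}(1+c_0\delta^{1/4})=:1-\kappa<1 .
\]
By Lemma~\ref{lem:naimark}, $\{Pe_i\}_{i\in I_b}=\{S^{-1/2}g_\lambda\}_{\lambda\in I_b}$ is a Riesz sequence with lower bound $\kappa$, so $\{g_\lambda\}_{\lambda\in I_b}$ is a Riesz sequence.

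\textbf{Step 3: the density, which is the main obstacle.} Proposition~\ref{geo} only gives $D_0^-(I_b)\approx 2^{-N}D_0^-(\Lambda')$, a loss of a factor $2^N$, which is fatal. The fix is to use all $2^N$ selectors at once rather than a single one. Let $\Lambda_b:=I_b$. The complementary bound says each $\{(\mathbf I-P)e_i\}_{I_b}$ has frame operator close to $2^{-N}$ times the full one, so the Bessel bound of each is about $2^{-N}$. But we need a \emph{single} set of density near $1$, and a single $I_b$ has density about $2^{-N}$.

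So the plan must instead be arranged so that $N$ is small and the density loss is absorbed differently. I would first try to pass to an intermediate stage in which the Bessel bound of the complement on $\Lambda'$ is already $<1$, which needs a lower frame bound for the complement. The idea is to exploit that the complement of the full Parseval frame on $\Lambda\sqcup\mathbb N$ has total \enquote{trace density} $\sum_\lambda(1-\langle g_\lambda,h_\lambda\rangle)$ per ball of order $D_0^+(\Lambda)-1\le\eta$, via the frame measure property. Then one selector step with $N=1$ and partitions from Proposition~\ref{geo} halves the counts but also gives a Bessel bound $\tfrac12+c_0\sqrt{2\delta}$.

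Reconciling the factor $2^{-N}$ with density $1-\varepsilon$ is the delicate point. I would handle it by running Steps~1–2 starting from a frame of density about $2^N(1+\eta)$ instead, obtained from \cite[Theorem 5.4]{bownik2025redundancy} applied to a union of $2^N$ near-critical frames. Proposition~\ref{geo} then returns $D_0^-(I_b)\ge 1-\varepsilon$. This requires checking that $\Lambda_\alpha$-removal still leaves diagonal entries close to $1$ in that setting, and this is where I expect most of the work.

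Finally, in the Parseval case with $\|g_x\|^2=C$, tracking $\kappa$ and the frame bounds of the near-critical frame gives Riesz bounds $c_1(\varepsilon)C$ and $c_2C$.
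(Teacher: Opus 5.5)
Your Steps 1 and 2 match the paper's setup: the near-critical frame, removal of $\Lambda_\alpha$ via Lemma~\ref{lem:upper_lambda_a}, completion by Lemma~\ref{lem:bessel_completion}, and a Naimark complement whose vectors have squared norm $<\delta$. The selection step, however, has a genuine gap, and the repair you propose in Step 3 does not work. You keep a single selector $I_b$ and use only the upper half of \eqref{eq:binary}. That does give a Riesz sequence, but by Proposition~\ref{geo} its density is only about $2^{-N}D_0^-(\Lambda')$.

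Starting instead from a frame of density about $2^N$ cannot compensate. By \eqref{eq:frd}, the diagonal entries $\langle g_\lambda,h_\lambda\rangle=\|S^{-1/2}g_\lambda\|^2$ of such a frame average to about $M^+=1/D_0^-(\Lambda)\approx 2^{-N}$ on large balls. So the set where they exceed $\alpha$ has upper density at most about $D_0^+(\Lambda)/(\alpha D_0^-(\Lambda))\approx 1/\alpha$. Removing $\Lambda_\alpha$ therefore leaves a set of density $O(1)$, not $2^N$, and a single selector of it again has density about $2^{-N}$. Before that removal, the Naimark complement vectors have squared norm about $1-2^{-N}$ on average, so Theorem~\ref{thm:binary} gives nothing useful there either.

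The missing idea is to discard one selector and keep all the others. Apply Theorem~\ref{thm:binary} to the whole Parseval family $\{(\mathbf I-P)e_i\}_{i\in I}$ with $I=\Lambda'\sqcup\mathbb N$, extending the partitions from Proposition~\ref{geo} arbitrarily on $\mathbb N$. Then use the lower half of \eqref{eq:binary}: if $c_0\sqrt{2^N\delta}\le 1/2$, then $\sum_{i\in I_b}(\mathbf I-P)e_i\otimes(\mathbf I-P)e_i\ge 2^{-N-1}\mathbf I_{\widetilde{\mathcal H}^\perp}$. Since the full sum equals $\mathbf I_{\widetilde{\mathcal H}^\perp}$, the family indexed by $J:=I\setminus I_b$ has Bessel bound at most $1-2^{-N-1}$. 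By Lemma~\ref{lem:naimark}, $\{S^{-1/2}g_\lambda\}_{\lambda\in\Lambda'\cap J}$ is then a Riesz sequence with lower bound $2^{-N-1}$.

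Now only one selector's worth of density is lost. Superadditivity of $D_0^-$ over the remaining $2^N-1$ selectors, together with \eqref{geo1}, gives $D_0^-(\Lambda'\cap J)\ge(1-2^{-N})D_0^-(\Lambda')-\varepsilon''$, where $\varepsilon''$ is the tolerance in Proposition~\ref{geo}. So a large $2^N$ is exactly what you want. Choose the parameters as follows:
\begin{enumerate}[(i)]
\item $N$ with $2/\varepsilon\le 2^N\le 4/\varepsilon$;
\item $\delta\le\varepsilon/(16c_0^2)$, so that $c_0\sqrt{2^N\delta}\le 1/2$;
\item $\eta\le\varepsilon\delta/2$, so that $D_0^+(\Lambda_\alpha)\le\eta/\delta\le\varepsilon/2$;
\item $\varepsilon''=\varepsilon^2/4$.
\end{enumerate}
This yields $D_0^-(\Lambda'\cap J)\ge(1-\varepsilon/2)^2-\varepsilon^2/4=1-\varepsilon$, with lower Riesz bound at least $\varepsilon/8$. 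Combined with the frame bounds of the near-critical frame from \cite[Theorem 5.4]{bownik2025redundancy}, this also gives the stated Riesz bounds in the Parseval case.
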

\begin{proof}
Given $\ve \in (0,1)$, we define $\alpha=\frac{\ve}{16c_0^2}$, $\ve'=\frac{\ve^2}{32c_0^2}$,
where $c_0>0$ is the absolute constant from Theorem \ref{thm:binary}.
  Under the assumptions, an application of \cite[Theorem 5.4]{bownik2025redundancy} implies that  there exists $\Gamma \subseteq X$ satisfying $D^+_0 (\Gamma) \leq 1+\varepsilon'$ and such that $\{g_{\gamma} \}_{\gamma \in \Gamma}$ is a frame for $\mathcal{H}$.
 Denote by $S_{\Gamma} : \mathcal{H} \to \mathcal{H}$ the frame operator of $\{g_{\gamma} \}_{\gamma \in \Gamma}$. Let $\{h_{\gamma} \}_{\gamma \in \Gamma} = \big\{S^{-1/2}_{\Gamma} g_{\gamma} \big\}_{\gamma \in \Gamma}$ be the canonical Parseval frame of $\{g_{\gamma} \}_{\gamma \in \Gamma}$. For $\alpha \in (0,1)$, consider
 \[
  \Gamma_{1-\alpha} = \big\{ \gamma \in \Gamma : \| h_{\gamma} \|^2 \leq 1-\alpha \big\}
 \]
and set $\Gamma' := \Gamma \setminus \Gamma_{1-\alpha}$. By Lemma \ref{lem:upper_lambda_a} we have
\[
 D^+_0 (\Gamma_{1-\alpha}) \leq \frac{D^+_0 (\Gamma) - 1}{\alpha} \le \frac{\ve'}{\alpha} = \frac{\ve}2.
 \]
On the other hand, the necessary density condition for the frame $\{g_{\gamma} \}_{\gamma \in \Gamma}$ yields that $D^-_0(\Gamma) \ge 1$. Thus,
\begin{equation}\label{dga}
D^-_0(\Gamma')  \ge D^-_0(\Gamma) - D^+_0(\Gamma_{1-\alpha}) \ge 1- \frac{\ve}2.
\end{equation}

Note that $\{h_{\gamma} \}_{\gamma \in \Gamma'}$ is a Bessel sequence in $\mathcal{H}$ with Bessel bound $1$ and satisfies $\| h_{\gamma} \|^2 > 1 - \alpha$ for all $\gamma \in \Gamma'$. By \Cref{lem:bessel_completion}, there exists a Hilbert space $\widetilde{\mathcal{H}} \supseteq \mathcal{H}$ and vectors $\varphi_i \in \widetilde{\mathcal{H}}$, $i \in \mathbb{N}$, satisfying $\| \varphi_n \|^2 = 1- \alpha$ for all $n \in \mathbb{N}$ and such that $\{h_{\gamma} \}_{\gamma \in \Gamma'} \cup \{\varphi_n \}_{n \in \mathbb{N}}$ is a Parseval frame for $\widetilde{\mathcal{H}}$.
Set $I := \Gamma' \cupdot \mathbb{N}$ as a disjoint union of $\Gamma'$ and $\N$. 
By Naimark's dilation theorem (Lemma \ref{lem:naimark}), we can embed $\widetilde{\mathcal{H}}$ into $\ell^2(I)$ such that $h_{\gamma} = P e_{\gamma}$ for $\gamma \in \Gamma'$ and $\varphi_n = P e_n$ for $n \in \mathbb{N}$, where $(e_i)_{i \in I}$ is the standard basis for $\ell^2 (I)$ and $P$ is the orthogonal projection of $\ell^2 (I)$ onto $\widetilde{\mathcal{H}}$. For $\gamma \in \Gamma'$ and $i \in \mathbb{N}$, let $\widetilde{h_{\gamma}} := (\mathbf{I} - P) e_{\gamma}$ and $\widetilde{\varphi}_n = (\mathbf{I} - P) e_n$. Then, $\{\widetilde{h_{\gamma}} \}_{\gamma \in \Gamma'} \cup \{\widetilde{\varphi_n} \}_{n \in \mathbb{N}}$ is a Parseval frame for the orthogonal complement $\widetilde{\mathcal{H}}^{\perp}$ of $\widetilde{\mathcal{H}} \subset \ell^2(I)$, which is known as the Naimark complement of $\{h_{\gamma} \}_{\gamma \in \Gamma'} \cup \{\varphi_n \}_{n \in \mathbb{N}}$. Note that
\begin{equation}\label{ns}
\| \widetilde{h_{\gamma}} \|^2, \| \widetilde{\varphi_{n}} \|^2 \leq \alpha
\qquad\text{for all } \gamma \in \Gamma',  n \in \mathbb{N}.
\end{equation}

The main goal is to combine Lemma \ref{lem:naimark}, Theorem \ref{thm:binary}, and Proposition \ref{geo} as follows.
By Lemma \ref{lem:naimark}, for any $J \subseteq I$ and $\delta > 0$, the system $\{ h_{\gamma} \}_{\gamma \in \Gamma' \cap J} \cup \{ \varphi_{i} \}_{i \in \mathbb{N} \cap J}$ is a Riesz sequence in $\widetilde{\mathcal{H}}$ with lower bound $\delta$ if and only if the system $\{ \widetilde{h_{\gamma}} \}_{\gamma \in \Gamma' \cap J} \cup \{ \widetilde{\varphi_{n}} \}_{n \in \mathbb{N} \cap J}$ is a Bessel sequence in $\widetilde{\mathcal{H}}$ with Bessel bound $1-\delta$. Consequently, if we can control the Bessel bound of the appropriately selected part $\{ \widetilde{h_{\gamma}} \}_{\gamma \in \Gamma' \cap J} \cup \{ \widetilde{\varphi_{n}} \}_{n \in \mathbb{N} \cap J}$ of the Naimark complement, then the collection $\{ h_{\gamma} \}_{\gamma \in \Lambda}$ is a Riesz sequence, where $\Lambda= \Gamma' \cap J$. To achieve this we will use Theorem \ref{thm:binary} to construct an appropriate binary selector $I_b \subseteq I$ and let $J=I \setminus I_b$, while Lemma \ref{geo} will guarantee that the Beurling density of $\Lambda$ stays close to $1$. By doing this we follow the general strategy from \cite[Corollary 6.4]{bownik2024selector}.

To wit, we let $N\in \N$ be such that
\begin{equation}\label{N}
\frac{2}{\varepsilon}= \frac{1}{8c_0^2\alpha} \le 2^N \le \frac{1}{4c_0^2\alpha}=\frac{4}{\varepsilon} .
\end{equation}
By Proposition \ref{geo}, for any $\ve''>0$,  there exists a choice of consecutive partitions of $\Gamma'$ into sets of size $\le 2$, such that every binary selector $I'_b$, $b\in \{0,1\}^N$, of order $N$ satisfies
\begin{equation}\label{g1}
\big| 2^N D^-_0(I'_b) - D^-_0(\Gamma') \big| \le \ve''.
\end{equation}
Take $\ve'' :=\ve^2/4$. Then, by \eqref{dga}, \eqref{N}, and \eqref{g1} we have
\begin{equation}\label{g2}
\begin{aligned}
D^-_0(\Gamma' \setminus I'_b)  &\ge \sum_{b'\in\{0,1\}^N: \ b' \ne b} D^-_0(I'_{b'}) \ge  (1-2^{-N}) D^-_0(\Gamma') - \ve''
\\
&\ge (1-\ve/2)^2 - \ve'' = 1-\ve.
\end{aligned}
\end{equation}

Next we apply Theorem \ref{thm:binary} to the Parseval frame $\{\widetilde{h_{\gamma}} \}_{\gamma \in \Gamma'} \cup \{\widetilde{\varphi_n} \}_{n \in \mathbb{N}}$ for $\widetilde{\mathcal{H}}^{\perp}$ satisfying \eqref{ns}. We extend the above  consecutive partitions of $\Gamma'$ into a partition of $I = \Gamma' \cupdot \mathbb{N}$ by partitioning $\N$ arbitrarily into sets of size $2$. Theorem  \ref{thm:binary} guarantees existence of binary selectors $I_b$, $b \in \{0,1\}^N$, that form a partition of $I$, such that for all $ b \in \{0,1\}^N$
\[
 \bigg\| 2^N \bigg(\sum_{\gamma \in \Gamma' \cap I_b} \widetilde{h_{\gamma}} \otimes \widetilde{h_{\gamma}} + \sum_{n \in \N \cap I_b} \widetilde{\varphi_n} \otimes \widetilde{\varphi}_n \bigg) - \mathbf{I}_{\widetilde{\mathcal{H}}^{\perp}}  \bigg\| \leq c_0 \sqrt{2^N \alpha} \le 1/2.
\]
Thus,
\[
\bigg(\sum_{\gamma \in \Gamma' \cap I_b} \widetilde{h_{\gamma}} \otimes \widetilde{h_{\gamma}} + \sum_{n \in \N \cap I_b} \widetilde{\varphi}_n \otimes \widetilde{\varphi}_n \bigg) \ge 2^{-N-1}\mathbf{I}_{\widetilde{\mathcal{H}}^{\perp}} \ge \frac{\ve}{8} \mathbf{I}_{\widetilde{\mathcal{H}}^{\perp}}.
\]
Hence, the system $\{\widetilde{h_{\gamma}} \}_{\gamma \in \Gamma' \cap I_b} \cup \{\widetilde{\varphi_n} \}_{n \in \mathbb{N} \cap I_b}$ is a frame for $\widetilde{\mathcal{H}}^{\perp}$ with lower frame bound $ \ve/8$ and upper frame bound $1$. Letting $J := I \setminus I_b$, this implies that complementary system $\{\widetilde{h_{\gamma}} \}_{\gamma \in \Gamma' \cap J} \cup \{\widetilde{\varphi_n} \}_{n \in \mathbb{N} \cap J}$ is a Bessel sequence in $\widetilde{\mathcal{H}}^{\perp}$ with bound $1-\ve/8$. By Lemma \ref{lem:naimark}, we conclude that $\{ h_{\gamma} \}_{\gamma \in \Gamma' \cap J} \cup \{\varphi_n \}_{n \in \mathbb{N} \cap J}$ is a Riesz sequence in $\widetilde{\mathcal{H}}$ with Riesz bounds $\ve/8$ and $1$.
Therefore, it follows that $\{ h_{\gamma} \}_{\gamma \in \Gamma' \cap J}$ is also a Riesz sequence in $\widetilde{\mathcal{H}}$ with Riesz bounds $\ve/8$ and $1$, and hence so it is in $\mathcal{H}$. Set $\Lambda := \Gamma' \cap J = \Gamma' \setminus (\Gamma' \cap I_b)$. Then, since $S_\Gamma$ is invertible, we have that
$ \{ g_{\gamma} \}_{\gamma \in \Lambda} = \{S^{1/2}_{\Gamma} h_{\gamma} \}_{\gamma \in \Lambda}$ is also a Riesz sequence in $\mathcal{H}$.
Lastly, since $I'_b := \Gamma' \cap I_b$ is a binary selector for the original consecutive partitions of $\Gamma'$ and $\Lambda = \Gamma' \setminus  I'_b$, it follows by \eqref{g2} that $D_0^-(\Lambda) \ge 1-\ve$.

For the additional part, we note that if $\{g_x \}_{x \in X}$ is a Parseval frame for $\mathcal{H}$ with $C := \| g_x \|^2$ for all $x \in X$, then the additional statement in \cite[Theorem 5.4]{bownik2025redundancy} yields that the frame $\{g_{\gamma} \}_{\gamma \in \Gamma}$ has frame bounds $c_1 (\ve') C$ and $c_2 C$. Therefore, the Riesz sequence $\{g_{\lambda} \}_{\lambda \in \Lambda}$ constructed above has Riesz bounds $\ve c_1(\ve') C/8$ and $c_2 C$, because we have the estimates
$\ve/(8 c_1(\ve')) C \leq \ve/(8\|S_{\Gamma}^{-1}\|)$ and
$\|S_{\Gamma} \| \leq c_2 C. $
This completes the proof.
\end{proof}

\section{Examples} \label{sec:examples}
In this section, we discuss various examples to which the main results of this paper apply and yield new results. We refer to \cite[Section 6]{bownik2025redundancy} for a more extensive discussion of these examples; see also \cite[Section 5]{fuehr2017density} and \cite[Section 6]{mitkovsi2020density} for various other examples.

\subsection{Euclidean space}
Throughout this subsection, we consider the classical case of $X$ being some Euclidean space, $d$ Euclidean distance on $X$ and $\mu$ Lebesgue measure on $X$.  It is readily verified that  our standing assumptions \eqref{eq:NDB}, \eqref{eq:wad} and \eqref{eq:doubling} are satisfied for the metric measure space $(X, d, \mu)$. We consider three classes of examples.

\subsubsection*{Exponential systems} For a set $\Omega \subseteq \R^d$ of finite measure, the associated Paley-Wiener space $\PW_{\Omega} $ consist of all  $f \in L^2 (\R^d)$ whose Fourier transform $(\mathcal{F} f) (\xi) =\int_{\R^d} f(t) e^{2\pi i \xi \cdot t} \; dt$ has support inside $\Omega$. The space $\PW_{\Omega}$ is a closed subspace of $L^2 (\R^d)$ with reproducing kernels given by
\[
k_x(y) = (\mathcal{F}^{-1} \mathds{1}_{\Omega}) (y-x), \quad x, y \in \R^d.
\]
In particular, we have $\| k_x \|^2 = |\Omega|$ for all $x \in \R^d$.
It is well-known that if $\Omega$ is bounded, then the reproducing kernels $\{k_x \}_{x \in \R^d}$ satisfy the homogeneous approximation property \eqref{eq:hap}, cf. \cite[Lemma 1]{grochenig1996landau}. In general, the kernels satisfy $\{k_x \}_{x \in \R^d}$ satisfy the frame redundancy/density property \eqref{eq:frd} by \cite[Theorem 6.1]{bownik2025redundancy}.

The following theorem is a special case of Theorem \ref{thm:main} phrased in terms of exponential systems. It provides an extension of \cite[Theorem 1.5]{vershynin2000coordinate} from bounded sets to arbitrary measurable sets of finite measure.

\begin{theorem} \label{thm:exponential}
 Let $\Omega \subseteq \R^d$ be a set of finite measure. For every $\ve > 0$, there exists $\Lambda \subseteq \R^d$ satisfying
 $
  D^- (\Lambda) \geq (1-\varepsilon) |\Omega|
 $
and such that $\{ e^{2\pi i \lambda \cdot } \mathds{1}_{\Omega} \}_{\lambda \in \Lambda}$ is a Riesz sequence in $L^2 (\Omega)$. Moreover, the Riesz bounds can be chosen as $c_1 (\varepsilon) |\Omega|$ and $c_2 |\Omega|$ for constants $c_1(\ve), c_2 > 0$, where $c_1 (\ve)$  only depends on $\ve$.
\end{theorem}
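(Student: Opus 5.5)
We apply Theorem~\ref{thm:main} to the Paley--Wiener space $\mathcal{H} = \PW_{\Omega} \subseteq L^2(\R^d)$, taking $g_x := k_x$ to be the reproducing kernels. Since $\{k_x\}_{x \in \R^d}$ is a continuous Parseval frame for $\PW_\Omega$ with $\|k_x\|^2 = |\Omega|$ for all $x$, the hypotheses $0 < c \le \|g_x\|^2 \le C$ hold with $c = C = |\Omega|$. The frame measure/density property \eqref{eq:frd} is supplied by \cite[Theorem 6.1]{bownik2025redundancy}; this is essential, since \eqref{eq:hap} may fail for unbounded $\Omega$. The standing assumptions on $(\R^d, |\cdot|, \text{Lebesgue})$ were noted above, and the diagonal condition holds because $k(x,x) = |\Omega|$.

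Theorem~\ref{thm:main}, in its Parseval form, then yields $\Lambda \subseteq \R^d$ with $D^-_0(\Lambda) \ge 1-\ve$. It also shows that $\{k_\lambda\}_{\lambda\in\Lambda}$ is a Riesz sequence in $\PW_\Omega$ with bounds $c_1(\ve)|\Omega|$ and $c_2|\Omega|$. Because $k(y,y) = |\Omega|$ is constant, $d\mu_0 = |\Omega|\,dy$, and therefore $D^-_0(\Lambda) = D^-(\Lambda)/|\Omega|$. This gives $D^-(\Lambda) \ge (1-\ve)|\Omega|$.

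It remains to transfer the result to exponentials. The map $U := \mathcal{F}$, with the sign convention used in the paper, is a unitary from $\PW_\Omega$ onto $L^2(\Omega)$. We compute $\mathcal{F} k_\lambda$ from $k_\lambda(y) = (\mathcal{F}^{-1}\mathds 1_\Omega)(y-\lambda)$: translation by $\lambda$ becomes modulation, so $\mathcal{F}k_\lambda = e^{\pm 2\pi i \lambda\cdot}\mathds 1_\Omega$, with the sign determined by the convention. Unitaries preserve Riesz sequences together with their bounds. If the sign comes out as $e^{-2\pi i\lambda\cdot}$, we replace $\Lambda$ by $-\Lambda$, which has the same lower Beurling density. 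This proves the theorem.

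The only genuinely delicate point is the frame measure/density property for unbounded $\Omega$, and it is imported from \cite{bownik2025redundancy}. Everything else is bookkeeping: tracking the normalization $\mu_0 = |\Omega|\,\mu$ and the Fourier sign convention.
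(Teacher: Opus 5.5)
Your proposal is correct and takes the paper's route: the paper obtains this theorem as a direct special case of Theorem~\ref{thm:main}, applied to the Parseval frame of reproducing kernels of $\PW_\Omega$ with the frame measure/density property taken from \cite[Theorem 6.1]{bownik2025redundancy}, then uses $D_0^- = D^-/|\Omega|$ and the Fourier transform to pass to exponentials. With the paper's sign convention $(\mathcal{F} f)(\xi) = \int f(t) e^{2\pi i \xi \cdot t}\,dt$, one gets $\mathcal{F}k_\lambda = e^{2\pi i \lambda\cdot}\mathds{1}_\Omega$ directly, so your fallback of reflecting $\Lambda$ to $-\Lambda$ is not needed.
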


\subsubsection*{Gabor systems}
For $z=(x, \xi) \in \R^{2d}$, define the operator $\pi(x, \xi)$ on $L^2 (\R^d)$ by
\[
 \pi(z)f(t)=\pi(x, \xi) f(t) = e^{2\pi i \xi \cdot t} f(t-x), \quad t \in \R^d, \; f \in L^2 (\R^d).
\]
For a unit vector $\phi \in L^2 (\R^d)$, define the map $V_{\phi} : L^2 (\R^d) \to L^2 (\R^{2d}), \; V_{\phi} f(z) = \langle f, \pi(z) \phi \rangle$, $z\in \R^{2d}$. Then, $V_\phi$ is an isometry, and the image space $V_{\phi} (L^2 (\R^d)) \subseteq L^2 (\R^{2d})$ is a reproducing kernel Hilbert space with reproducing kernels given by
\[
 k_{z} := V_{\phi} \pi(z) \phi, \quad z \in \R^{2d}.
\]
Note that $\| k_z \|^2 = 1$ for all $z \in \R^{2d}$. For an arbitrary $g \in L^2 (\R^d)$, the vectors
\[
 g_z := V_{\phi} \pi(z) g, \quad z \in \R^{2d}
\]
form a continuous frame for $V_{\phi} (L^2 (\R^d))$ and satisfy the frame redundancy/density property \eqref{eq:frd} by \cite[Theorem 3]{balan2006density2}. Alternatively, if $\phi \in L^2 (\R^d)$ is the normalized Gaussian $\phi = 2^{d/4} e^{-\pi |\cdot|^2}$ , then the vectors $\{g_z \}_{z \in \R^{2d}}$ satisfy the homogeneous approximation property \eqref{eq:hap} by, e.g., \cite[Lemma 1]{ramanathan1995incompleteness}. 

The following theorem is a direct consequence of Theorem \ref{thm:main} and extends the existence result of Gabor Riesz sequences near the critical density \cite[p. 3789]{casazza2012infinite} from vectors in the modulation space $M^1 (\R^d)$ to arbitrary vectors in $L^2 (\R^d)$.

\begin{theorem}
 Let $g \in L^2 (\R^d)$ be nonzero. For every $\ve > 0$, there exists $\Lambda \subseteq \R^{2d}$ satisfying
 $
  D^- (\Lambda)  \geq 1-\varepsilon
 $
and such that $\{ \pi(\lambda) g \}_{\lambda \in \Lambda}$ is a Riesz sequence in $L^2 (\R^d)$.
\end{theorem}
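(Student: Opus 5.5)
We deduce the statement from Theorem~\ref{thm:main}, applied in the Gabor setting described above. Fix the normalized Gaussian $\phi = 2^{d/4} e^{-\pi |\cdot|^2}$. The space $\mathcal{H} := V_\phi(L^2(\R^d)) \subseteq L^2(\R^{2d})$ is a reproducing kernel Hilbert space on $X=\R^{2d}$, equipped with Euclidean distance and Lebesgue measure, so the standing assumptions \eqref{eq:NDB}, \eqref{eq:wad} and \eqref{eq:doubling} hold. Since $\|k_z\|^2 = 1$, the diagonal condition \eqref{eq:dc} holds with $C_1=C_2=1$. In particular $\mu_0 = \mu$, so that $D^-_0 = D^-$.

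Next, consider the system $g_z := V_\phi \pi(z) g$ for $z\in\R^{2d}$. Because $V_\phi$ is an isometry and $\pi(z)$ is unitary, we have $\|g_z\|^2 = \|g\|_2^2 =: C$ for all $z$. This gives the two-sided norm bound $0 < c \le \|g_z\|^2 \le C$ with $c = C>0$, using that $g\neq 0$. Moreover, the orthogonality relations $\int_{\R^{2d}} |\langle f, \pi(z) g\rangle|^2\,dz = \|f\|^2\|g\|^2$ show that $\{g_z\}_{z}$ is a tight frame for $\mathcal{H}$ with bound $\|g\|^2$, hence a frame. The frame measure/density property \eqref{eq:frd} for $\{g_z\}$ is supplied by \cite[Theorem 3]{balan2006density2}, as noted above. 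This is the only nontrivial input, and it is exactly what allows $g$ to be an arbitrary nonlocalized $L^2$ function: one must not try to verify \eqref{eq:hap} for general $g$, since this may fail. Thus all hypotheses of Theorem~\ref{thm:main} are met.

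Applying Theorem~\ref{thm:main} yields, for the given $\ve>0$, a countable $\Lambda\subseteq \R^{2d}$ with $D^-(\Lambda) = D^-_0(\Lambda)\ge 1-\ve$ such that $\{V_\phi \pi(\lambda) g\}_{\lambda\in\Lambda}$ is a Riesz sequence in $\mathcal{H}$. Since $V_\phi : L^2(\R^d)\to\mathcal{H}$ is a unitary onto its image, we have
\[
\Big\|\sum_\lambda c_\lambda \pi(\lambda) g\Big\| = \Big\|\sum_\lambda c_\lambda V_\phi\pi(\lambda) g\Big\|
\]
for all finitely supported $c$. Hence the Riesz bounds transfer, and $\{\pi(\lambda) g\}_{\lambda\in\Lambda}$ is a Riesz sequence in $L^2(\R^d)$.

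We can also identify the Riesz bounds. If desired, replace $g$ by $g/\|g\|$, so that $\{g_z\}$ is Parseval with $\|g_z\|^2$ constant. The additional statement of Theorem~\ref{thm:main} then gives Riesz bounds of the form $c_1(\ve)\|g\|^2$ and $c_2\|g\|^2$ after rescaling. The main point to check carefully is the applicability of \eqref{eq:frd} from \cite{balan2006density2} in the exact normalization used here, namely densities with respect to $k(y,y)\,dy = dy$. Everything else is bookkeeping.
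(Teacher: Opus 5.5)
Your proposal is correct and follows the paper's route: the statement is a direct consequence of Theorem~\ref{thm:main} applied to $g_z = V_\phi \pi(z) g$ in $V_\phi(L^2(\R^d))$, with $D^-_0 = D^-$ because $k(y,y)=1$, with \eqref{eq:frd} taken from \cite{balan2006density2}, and with the Riesz property transferred back through the isometry $V_\phi$. One side remark is inaccurate but not load-bearing. For the Gaussian $\phi$ you chose, the paper notes (citing \cite{ramanathan1995incompleteness}) that $\{g_z\}$ does satisfy \eqref{eq:hap} for every $g \in L^2(\R^d)$, and \eqref{eq:wl} holds as well, so Theorem~\ref{thm:framemeasure} would give an alternative justification of \eqref{eq:frd} rather than a route that "may fail".
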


\subsubsection*{Kernels with nonconstant diagonals} Theorem \ref{thm:main} also applies to various examples for which the associated reproducing kernels do not have a constant diagonal. For example, Theorem \ref{thm:main} applies to spectral subspaces of elliptic differential operators \cite{grochenig2024necessary} and weighted spaces of entire functions \cite{grochenig2019strict}. We refer to these papers for the verifications of the weak localization condition \eqref{eq:wl} and the homogeneous approximation property \eqref{eq:hap}.

\subsection{Lie groups of polynomial growth}
In this section, we consider reproducing kernel Hilbert spaces arising from unitary representations of Lie groups of polynomial growth. The frame and Riesz property of such kernels and associated coherent systems have been studied in, e.g., \cite{fuehr2017density, caspers2023overcompleteness, enstad2025coherent, enstad2025dynamical}.

Let $G$ be a connected noncompact Lie group with left Haar measure $\mu$. The group $G$ is said to be of \emph{polynomial growth} if for every compact unit neighborhood $U \subseteq G$, there exists $C, D > 0$ such that
\[
 \mu(U^n) \leq C n^D
\]
for all $n \in \mathbb{N}$. In this case, a left-invariant Riemannian metric or Carnot-Carath\'eodory metric on $G$ can be shown to satisfy the standing assumptions \eqref{eq:NDB}, \eqref{eq:wad} and \eqref{eq:doubling}, see, e.g., \cite[Corollary 1.6, Section 4.3]{breuillard2014geometry}.

A projective unitary representation $(\pi, \Hpi)$ of $G$ on a separable Hilbert space $\Hpi$ is a measurable map $\pi : G \to \mathcal{U}(\Hpi)$ satisfying $\pi(e) = I_{\Hpi}$ and
\[
 \pi(x) \pi(y) = \sigma(x,y) \pi(xy), \quad x,y \in G,
\]
for a (measurable) function $\sigma : G \times G \to \mathbb{T}$. For a nonzero vector $\eta \in \Hpi$, we define the associated map $V_{\eta} : \Hpi \to L^2 (G)$ by $V_{\eta} f = \langle f, \pi(\cdot) \eta \rangle$. We assume that $\pi$ is irreducible and that there exists $\eta \in \Hpi \setminus \{0\}$ such that $V_{\eta} \eta \in L^2 (G)$, that is, that $\pi$ is irreducible and square-integrable. In this case, there exists $d_{\pi} > 0$, called the formal dimension of $\pi$, such that
\[
 \int_G |\langle f_1, \pi(x) f_2 \rangle |^2 \; d\mu(x) = d_{\pi}^{-1} \| f_1 \|^2 \| f_2 \|^2 \quad \text{for all} \quad f_1, f_2 \in \Hpi,
\]
 see, e.g., \cite{aniello2006square}. In addition, we define the space
\[
 \mathcal{B}_{\pi} := \bigg\{ \eta \in \Hpi : \int_G \sup_{y \in B_1 (e)} |V_{\eta} \eta (xy)|^2 \; d\mu(x) < \infty \bigg\}.
\]
This space can be shown to be dense in $\Hpi$, see, e.g., \cite[Remark 1]{grochenig2008homogeneous}.

If $\eta \in \mathcal{B}_{\pi}$ is nonzero, then the image space $V_{\eta} (\Hpi) \subseteq L^2 (G)$ is a reproducing kernel Hilbert space with reproducing kernels given by
\[
 k_x(y) = \langle \pi(y) \eta , \pi(x) \eta \rangle, \quad x, y \in G.
\]
The kernels $\{k_x \}_{x \in G}$ satisfy the frame measure/density property \eqref{eq:frd} by \cite[Theorem 3.2]{caspers2023overcompleteness} and the homogeneous approximation property \eqref{eq:hap} by \cite[Proposition 2]{grochenig2008homogeneous}.

The following is a direct consequence of Theorem \ref{thm:main}.

\begin{theorem}
 Let $G$ be a connected noncompact Lie group of polynomial growth and let $(\pi, \Hpi)$ be an irreducible, square-integrable projective representation of formal dimension $d_{\pi} > 0$. Let $\eta \in \mathcal{B}_{\pi}$. For every $\varepsilon > 0$, there exists $\Lambda \subseteq G$ satisfying $D^-(\Lambda) \geq (1-\varepsilon) d_{\pi}$ and such that $\{ \pi(\lambda) \eta \}_{\lambda \in \Lambda}$ is a Riesz sequence in $\Hpi$.
\end{theorem}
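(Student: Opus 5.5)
The plan is to reduce this statement to Theorem~\ref{thm:main}. The setting is $X = G$ with a left-invariant metric satisfying the standing assumptions, the space $\mathcal{H} = V_\eta(\Hpi) \subseteq L^2(G)$, and the system $g_x := k_x = V_\eta \pi(x)\eta$.

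The first step is to normalize. Replace $\eta$ by $\eta/\|\eta\|$; this still lies in $\mathcal{B}_\pi$, and it does not change whether $\{\pi(\lambda)\eta\}_{\lambda\in\Lambda}$ is a Riesz sequence. The orthogonality relations then say that $d_\pi^{1/2} V_\eta$ is an isometry of $\Hpi$ onto $\mathcal{H}$. The reproducing kernel of $\mathcal{H}$ with respect to $\mu$ is $k(y,x) = d_\pi \langle \pi(x)\eta, \pi(y)\eta\rangle$. For this proposal I will define $V_\eta$ so that the kernels $k_x$ are exactly the images $d_\pi^{1/2}V_\eta$-translates of $\pi(x)\eta$, up to this constant. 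With that convention $k(x,x) = d_\pi$ is constant. The diagonal condition \eqref{eq:dc} therefore holds, and $\mu_0 = d_\pi \mu$, so
\[
D_0^-(\Lambda) = d_\pi^{-1} D^-(\Lambda).
\]

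The second step is to check the hypotheses of Theorem~\ref{thm:main}. The family $\{k_x\}_{x\in G}$ is a Parseval (continuous) frame for $\mathcal{H}$ by the reproducing formula, and $\|k_x\|^2 = d_\pi$ is constant, so $c = C = d_\pi$. The frame measure/density property \eqref{eq:frd} is quoted from \cite[Theorem 3.2]{caspers2023overcompleteness}. Alternatively, it follows from Theorem~\ref{thm:framemeasure} once we have weak localization \eqref{eq:wl} and the homogeneous approximation property \eqref{eq:hap}:
\begin{itemize}
\item \eqref{eq:hap} is \cite[Proposition 2]{grochenig2008homogeneous} for $\eta \in \mathcal{B}_\pi$;
\item \eqref{eq:wl} holds because $|k_x(y)| = d_\pi |V_\eta\eta(x^{-1}y)|$ up to the left-invariance of $\mu$ and $d$, and $V_\eta\eta \in L^2(G)$, so its tail outside $B_r(e)$ tends to zero uniformly in $x$.
\end{itemize}

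The third step is to apply Theorem~\ref{thm:main} and transfer the conclusion. Given $\varepsilon>0$, the theorem gives $\Lambda$ with $D_0^-(\Lambda) \ge 1-\varepsilon$ such that $\{k_\lambda\}_{\lambda\in\Lambda}$ is a Riesz sequence in $\mathcal{H}$. Since $k_\lambda$ is the image of $\pi(\lambda)\eta$ under a scalar multiple of the unitary $d_\pi^{1/2}V_\eta : \Hpi \to \mathcal{H}$, the system $\{\pi(\lambda)\eta\}_{\lambda\in\Lambda}$ is a Riesz sequence in $\Hpi$. Finally, $D^-(\Lambda) = d_\pi D_0^-(\Lambda) \ge (1-\varepsilon)d_\pi$.

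I expect the main obstacle to be bookkeeping the normalization of $V_\eta$ and the kernel. The paper's displayed formula $k_x(y) = \langle \pi(y)\eta, \pi(x)\eta\rangle$ is correct only when the orthogonality constant is absorbed, and getting this wrong would shift the density by a factor of $d_\pi$. I would settle it by computing $\|k_x\|^2 = k(x,x)$ directly from the orthogonality relation. A secondary point is that left-invariance of the metric is what makes the translation argument for \eqref{eq:wl} uniform in $x$.
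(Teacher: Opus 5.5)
Your proposal is correct and follows the same route as the paper. The paper also obtains this result as a direct application of Theorem~\ref{thm:main} to the reproducing kernels of $V_\eta(\Hpi)\subseteq L^2(G)$, taking the frame measure/density property from \cite{caspers2023overcompleteness} (and the homogeneous approximation property from \cite{grochenig2008homogeneous}). Your extra bookkeeping is sound and in fact needed. The intrinsic kernel is $k_x = d_\pi\|\eta\|^{-2}V_\eta\pi(x)\eta$ with $k(x,x)=d_\pi$, so $\mu_0=d_\pi\mu$ and $D^-=d_\pi D_0^-$, which is exactly where the factor $d_\pi$ in the statement comes from; the paper's displayed formula for $k_x$ suppresses this constant.
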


\section*{Tool and computational resource disclosure}
No large language models have been used for suggesting mathematical arguments or finding references. The mathematical content and the manuscript text are entirely the work of the authors. 

\section*{Acknowledgements}
The first author was partially supported by the NSF grant DMS-2349756.
For J.~v.~V., this research was funded in whole or in part by the Austrian
Science Fund (FWF): 10.55776/PAT2545623.

\bibliographystyle{abbrv}
\bibliography{bib}

\end{document}